\newtheorem{thm}{Theorem}[section]
\newtheorem{theorem}[thm]{Theorem}
\newtheorem{lemma}[thm]{Lemma}
\newtheorem{proposition}[thm]{Proposition}
\theoremstyle{definition}
\theoremstyle{remark}
\newtheorem{remark}[thm]{Remark}
\newenvironment{theorem*}[1]{\smallskip\noindent{\bf #1.}\it}{\medskip}
\newcommand\dom{\operatorname{dom}}
\newcommand\bC{{\mathbb C}}
\newcommand\bR{{\mathbb R}}
\newcommand\bZ{{\mathbb Z}}
\newcommand\al{\alpha}
\newcommand\la{\lambda}
\newcommand\bu{\mathbf{u}}
\newcommand\bw{\mathbf{w}}
\newcommand\bv{\mathbf{v}}
\numberwithin{equation}{section} \setcounter{section}{0}
\begin{document}
\title{Asymptotics of eigenvalues and eigenfunctions of energy-dependent Sturm--Liouville equations}

\author[N.~Pronska]{Nataliya Pronska}%

\address{Institute for Applied Problems of Mechanics and Mathematics,
3b~Naukova st., 79601 Lviv, Ukraine}
\email{nataliya.pronska@gmail.com}

\subjclass[2010]{Primary 34L20, Secondary 34B07, 34B24, 47A75}%

\keywords{energy-dependent potentials, Sturm--Liouville equations}%

\date{\today}

\begin{abstract}
We study asymptotics of eigenvalues, eigenfunctions and norming
constants of singular energy-dependent Sturm--Liouville equations
with complex-valued potentials. The analysis essentially exploits
the integral representation of solutions, which we derive using
the connection of the problem under study and a Dirac system of a
special form.
\end{abstract}

\maketitle

\section{Introduction}

The main objective of the present paper is to study asymptotics of
eigenvalues and eigenfunctions of Sturm--Liouville equations
on~(0,1) with energy-dependent potentials, viz.
\begin{equation}\label{eq:intr.spr}
    -y''+qy+2\lambda p y=\lambda^2y.
\end{equation}
Here~$\la\in\bC$ is the spectral parameter,~$p$ is a
complex-valued function in~$L_2(0,1)$ and~$q$ is a complex-valued
distribution in the Sobolev space~$W_2^{-1}(0,1)$, i.e.~$q =r'$
with a complex-valued~$r\in L_{2}(0,1)$. We consider
equation~\eqref{eq:intr.spr} mostly under the Dirichlet boundary
conditions
\begin{equation}\label{eq:intr.b.c.}
y(0)=y(1)=0.
\end{equation}
We restrict our attention to such boundary conditions just to
concentrate on the ideas and to avoid unnecessary technicalities.
Other separated boundary conditions can be treated analogously; in
particular, in the last section we formulate some results for the
case of the mixed boundary conditions.

Energy-dependent Sturm--Liouville equations are of importance in
classical and quantum mechanics. For instance, they are used for
modelling mechanical systems vibrations in viscous media. The
Klein--Gordon equations, which describe the motion of massless
particles such as photons, can also be reduced to the
form~\eqref{eq:intr.spr}. The corresponding evolution equations
are used to model the interactions between colliding relativistic
spinless particles. In such mechanical models the spectral
parameter~$\la$ is related to the energy of the system, which
explains the terminology ``energy-dependent'' used for the
spectral equation~\eqref{eq:intr.spr}.

Asymptotic behaviour of eigenvalues, eigenfunctions, and other
spectral characteristics for usual Sturm--Liouville operators are
studied sufficiently well (see, e.g.\
\cite{LevSar:1991,KosSar:79,PosTru:1987}). The spectral problem
for energy-dependent Sturm--Liouville equation~\eqref{eq:intr.spr}
with~$p\in W_2^1[0,\pi]$ and~$q\in L_2[0,\pi]$ was considered by
M.~Gasymov and G.~Guseinov in their short paper~\cite{GasGus81} of
1981. Some results on asymptotics are formulated there without
proofs. Analogous problems under more general boundary conditions
were also considered by I.~Nabiev in \cite{Nab041}. The
asymptotics of eigenvalues and eigenfunctions for Sturm--Liouville
equations with singular potentials~$q\in W_2^{-1}(0,1)$ were
studied by A.~Savchuk and A.~Shkalikov in~\cite{SavShk:1999}.

Our aim in this paper is to investigate the corresponding
asymptotics for energy-dependent Sturm--Liouville
equations~\eqref{eq:intr.spr} under minimal smoothness assumptions
on the potentials~$p$ and~$q$, including, e.g.\ the case when~$q$
contains Dirac delta-functions and/or Coulumb-like singularities.
Our approach consists in establishing a strong connection between
the spectral problem~\eqref{eq:intr.spr} and the spectral problem
for a Dirac system of a special form. We then study the latter
and, in particular, derive the integral representation for the
solution of~\eqref{eq:intr.spr} which, in turn, is a basis for the
subsequent asymptotic analysis

The paper is organised as follows. In the next section, we
rigorously set the spectral problem under study and give main
definitions. Connection between the spectral
problem \eqref{eq:intr.spr} and that for a special Dirac system is
discussed in Section~\ref{sec:red}. In Section~\ref{sec:tr.op}, we
construct the transformation operator relating the solution of the
obtained system with that of the Dirac system with zero potential.
Next in Section~\ref{sec:asy}, we derive the asymptotics of
eigenvalues, eigenfunctions and the corresponding norming
constants for the
problem~\eqref{eq:intr.spr},~\eqref{eq:intr.b.c.} and justify the
factorization formula for its characteristic function.  In the
last section we formulate analogous results for the spectral
problem~\eqref{eq:intr.spr} under more general boundary
conditions. Appendix~\ref{sec:A} contains main definitions from
the spectral theory for operator pencils. We also prove there that
the algebraic multiplicity of~$\la$ as an eigenvalue of the
problem~\eqref{eq:intr.spr},~\eqref{eq:intr.b.c.} coincides with
that of~$\la$ as an eigenvalue of the corresponding operator
pencil.

\emph{Notations.} Throughout the paper, we denote by
$\mathcal{M}_2=\mathcal{M}_2(\bC)$ the linear space of $2\times 2$
matrices with complex entries endowed with the Euclidean operator
norm. Next,~$p_0$ will stand for~$\int_0^1p(s)ds$. The
superscript~$\mathrm{t}$ will signify the transposition of vectors
and matrices, e.g.\ $(c_1,c_2)^{\mathrm{t}}$ is the column
vector~$\binom{c_1}{c_2}$.

\section{Preliminaries}\label{sec:pre}

In this section, we recall main definitions and formulate the
spectral problem under study more rigorously. To start with,
introduce the differential expression
\[
    \ell(y):=-y''+qy.
\]
Since the potential~$q$ is a complex-valued distribution, we need
to define the action of~$\ell$ in detail. To do this, we use the
regularization by quasi-derivative method due to Savchuk and
Shkalikov~\cite{SavShk:1999,SavShk:2003}. Take a function~$r$
from~$L_2(0,1)$ such that~$q=r'$ and for every absolutely
continuous~$y$ introduce its quasi-derivative~$y^{[1]}:=y'-ry$.
Then define~$\ell(y)$ as
\[
   \ell (y) = -\bigl(y^{[1]}\bigr)' - r
y^{[1]} - r^2 y
\]
on the domain
\[
    \dom \ell = \{y \in AC [0,1] \mid y^{[1]} \in AC[0,1], \ \ell(y) \in L_2(0,1)\}.
\]
A straightforward verification shows that so defined~$\ell(y)$
coincides with~$-y''+qy$ in the distributional sense. Therefore we
can recast the equation~\eqref{eq:intr.spr} as
\begin{equation}\label{eq:pre.spr}
    \ell(y)+2\la p y=\la^2y.
\end{equation}

A number $\lambda\in\bC$ is called an \emph{eigenvalue} of the
problem~\eqref{eq:intr.spr},~\eqref{eq:intr.b.c.} if
equation~\eqref{eq:pre.spr} possesses a nontrivial solution
satisfying the boundary conditions~\eqref{eq:intr.b.c.}. This
solution is then called an \emph{eigenfunction} of the
problem~\eqref{eq:intr.spr},~\eqref{eq:intr.b.c.} corresponding
to~$\la$.

Let~$y(x,z)$ be the solution of~\eqref{eq:pre.spr} with~$z$
instead of~$\la$ subject to the initial
conditions~$y(0)=0$,~$y^{[1]}(0)=1$. This solution exists and is
unique~\cite{SavShk:1999}, so that~$\la$ is an eigenvalue of the
problem~\eqref{eq:intr.spr},~\eqref{eq:intr.b.c.} if and only if
it is a zero of the \emph{characteristic
function}~$\varphi(z):=y(1,z)$. The corresponding eigenfunction
then coincides up to a constant factor with~$y(\cdot,\la)$. The
multiplicity of~$\la$ as a zero of~$\varphi(z)$ is called an
\emph{algebraic multiplicity} of the eigenvalue~$\la$
of~\eqref{eq:intr.spr},~\eqref{eq:intr.b.c.}.

As we shall see further~$\varphi$ is an analytic nonconstant
function, and so the set of its zeros is a discrete subset
of~$\bC$. This shows that the set of eigenvalues
of \eqref{eq:intr.spr},\eqref{eq:intr.b.c.} is discrete. Without
loss of generality, we shall make the following standing
assumption:
\begin{itemize}
\item[(A)] $0$ is not a zero of the characteristic function~$\varphi(z)$, i.e.\ it is not an eigenvalue
of the problem~\eqref{eq:intr.spr},~\eqref{eq:intr.b.c.}.
\end{itemize}
In fact, (A) is achieved  by shifting the spectral parameter~$\la$
if necessary; then for~$\la_0$ such that~$\varphi(\la_0)\ne 0$ the
problem~\eqref{eq:intr.spr},~\eqref{eq:intr.b.c.} with new
spectral parameter~$\mu:=\la-\la_0$ and with~$p$ and~$q$ replaced
by~$p+\la_0$ and~$q-2\la_0p-\la_0^2$ respectively the
assumption~(A) holds.

 The
spectral problem~\eqref{eq:intr.spr},~\eqref{eq:intr.b.c.} can be
regarded as the spectral problem for the quadratic operator
pencil~$T$ (see~\cite{Mar:88,Pro:2012pro}) defined by
\begin{equation}\label{eq:intr.T}
    T(\la)y:= \la^2y-2\la py +y''-qy
\end{equation}
on the~$\la$-independent domain
\[
    \dom T:=\{y \in \dom \ell \mid y(0)=y(1)=0\}.
\]
For the pencil~$T$ one can introduce the notions of the spectrum,
the eigenvalues and corresponding eigenvectors, their geometric
and algebraic multiplicities (see, e.g. \cite{Mar:88} and
Appendix~\ref{sec:A}). The spectral properties of~$T$ were
discussed in~\cite{Pro:2012pro}. In particular, it was proved
therein that the spectrum of~$T$ consists only of eigenvalues,
which can easily be shown to coincide with the eigenvalues of the
problem~\eqref{eq:intr.spr},~\eqref{eq:intr.b.c.} defined above.
In Appendix~\ref{sec:A}, we show that the algebraic multiplicity
of~$\la$ as an eigenvalue
of~\eqref{eq:intr.spr},~\eqref{eq:intr.b.c.} coincides with that
of~$\la$ as an eigenvalue of~$T$.

\section{Reduction to the Dirac system}\label{sec:red}

In this section we reduce equation~\eqref{eq:intr.spr} to
a~$\la$-linear Dirac-type system of the first order. We shall
further use the connection between~\eqref{eq:intr.spr} and this
system to derive the asymptotics of interest.

The following observation plays an important role in the reduction
procedure.

\begin{lemma}\label{lem:pos_sol}
The equation~$\ell(y)=0$ possesses a complex-valued solution which
does not vanish on~$[0,1]$.
\end{lemma}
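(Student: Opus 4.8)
The plan is to realize $\ell(y)=0$ as a first-order linear system and then argue, by a measure-theoretic argument on the projective line, that among all solutions at least one is nowhere zero. First I would rewrite the scalar equation $\ell(y)=0$ in the quasi-derivative variables $\bv=(y,y^{[1]})^{\mathrm t}$ as
\[
    \bv' = \begin{pmatrix} r & 1 \\ -r^2 & -r \end{pmatrix}\bv ,
\]
whose coefficient matrix has entries in $L_2(0,1)\subset L_1(0,1)$ and is traceless. By the Carath\'eodory existence and uniqueness theory, solutions exist and are unique on all of $[0,1]$, and the Wronskian $W(y_1,y_2):=y_1 y_2^{[1]}-y_1^{[1]}y_2$ of any two solutions is constant. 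Choosing a fundamental pair $y_1,y_2$ with $W\equiv 1$, I note that $(y_1(x),y_2(x))\ne(0,0)$ for every $x$, so that $\gamma(x):=[y_1(x):y_2(x)]$ defines a curve into the projective line $\mathbb{CP}^1$.

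The key observation is that a nontrivial linear combination $y=\alpha y_1+\beta y_2$ is nowhere vanishing on $[0,1]$ precisely when the point $[-\beta:\alpha]$ is missed by $\gamma$, since $y(x)=0$ forces $[y_1(x):y_2(x)]=[-\beta:\alpha]$. Hence it suffices to show that the image $\gamma([0,1])$ is a proper subset of $\mathbb{CP}^1$. To this end I would bound the speed of $\gamma$ in the Fubini--Study metric. In the chart where $y_2\ne 0$ the local coordinate is $\zeta=y_1/y_2$, and since $y_j'=y_j^{[1]}+r y_j$ the singular term cancels in the quotient, giving $\zeta'=-W/y_2^2=-1/y_2^2$. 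A direct computation then yields the Fubini--Study speed
\[
    \frac{|\zeta'|}{1+|\zeta|^2}=\frac{1}{|y_1(x)|^2+|y_2(x)|^2},
\]
and the same expression arises symmetrically in the chart $y_1\ne 0$. Because $|y_1|^2+|y_2|^2$ is continuous and strictly positive on the compact interval $[0,1]$, this speed is bounded, so $\gamma$ is Lipschitz.

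Finally, a Lipschitz image of $[0,1]$ has finite length and therefore two-dimensional Hausdorff, hence area, measure zero in the surface $\mathbb{CP}^1$; in particular its complement is nonempty. Selecting any $P=[-\beta:\alpha]\in\mathbb{CP}^1\setminus\gamma([0,1])$ produces a solution $y=\alpha y_1+\beta y_2$ of $\ell(y)=0$ that does not vanish anywhere on $[0,1]$, as required. The main obstacle I anticipate is purely the bookkeeping at the step of the speed computation: one must verify that the distributional nature of $q$ causes no trouble, that is, that the Wronskian is genuinely constant and that the $r$-terms cancel in $\zeta'$ so that the Fubini--Study speed stays bounded. Once this cancellation is in hand, the measure-zero conclusion is immediate and the existence of a nonvanishing solution follows.
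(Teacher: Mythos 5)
Your argument is correct, but it is genuinely different from the paper's. The paper also reduces to showing that the ``bad'' set of solutions is small, but it parametrizes solutions by a \emph{complex Pr\"ufer angle}: writing $y=\rho\sin\theta$, $y^{[1]}=\rho\cos\theta$, the angle satisfies the Carath\'eodory equation $\theta'=(\cos\theta+r\sin\theta)^2$, and a solution vanishes exactly where $\theta\in\pi\bZ$; by continuous dependence on the initial point, the set of initial angles $\theta(0)$ leading to a zero somewhere in $[0,1]$ is a union of compacta $I_k=I_0+\pi k$, which lies in a horizontal strip and so cannot exhaust $\bC$. You instead parametrize solutions projectively by $[\alpha:\beta]$ and show the bad set is the image of the curve $\gamma(x)=[y_1(x):y_2(x)]$ in $\mathbb{CP}^1$, which is rectifiable and hence of two-dimensional measure zero. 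Your key computations check out: the quasi-derivative system is traceless so the Wronskian $y_1y_2^{[1]}-y_1^{[1]}y_2$ is constant (which also rules out a common zero of $y_1,y_2$), the $r$-terms cancel in $\zeta'=(y_1'y_2-y_1y_2')/y_2^2=-W/y_2^2$, and the Fubini--Study speed $1/(|y_1|^2+|y_2|^2)$ is bounded on the compact interval; the only bookkeeping left implicit is patching the two affine charts along $[0,1]$, which compactness handles. Each approach buys something: the paper's needs continuous dependence on initial data for Carath\'eodory equations (cited from Filippov) but stays entirely within ODE theory, while yours needs only existence/uniqueness plus elementary Hausdorff-measure reasoning and in fact shows that \emph{almost every} direction $[\alpha:\beta]\in\mathbb{CP}^1$ yields a nowhere-vanishing solution, a slightly stronger conclusion.
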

\begin{proof}
Note firstly that for every complex~$a,b$ and every~$x_0$
from~$[0,1]$ the equation~$\ell(y)=0$ possesses a unique solution
satisfying the conditions~$y(x_0)=a$ and~$y^{[1]}(x_0)=b$ (see,
e.g.\ \cite{SavShk:1999}).

Assume that~$y$ is a solution of~$\ell(y)=0$. We introduce the
polar coordinates~$\rho$ and~$\theta$ via~$y(x)=\rho(x)\sin
\theta(x)$ and~$y^{[1]}(x)=\rho(x)\cos\theta(x)$. Clearly, the
solution~$y$ vanishes at some point~$x_0$ from~$[0,1]$ if and only
if~$\theta(x_0)=\pi k$ for some~$k\in\bZ$.  The function~$\theta$
is called the Pr\"{u}fer angle (see~\cite{Har:1964,SavShk:2003})
and can be defined to be continuous; it then satisfies the
equation
\begin{equation}\label{eq:pre.eq_theta}
    \theta'(x)=(\cos \theta(x)+r(x)\sin\theta(x))^2.
\end{equation}

Equation~\eqref{eq:pre.eq_theta} has the
form~$\theta'=f(x,\theta)$ with the right-hand side~$f$ that is
not continuous in~$x$. However,~\eqref{eq:pre.eq_theta} is a
so-called Caratheodory equation and under the initial
condition~$\theta(\xi)=0$ with~$\xi\in[0,1]$ it possesses a unique
solution~$\theta(x,\xi)$ (see~e.g.~\cite[Theorem 1.1.2]{Fil:88});
this solution depends continuously on~$\xi$ (see~\cite[Theorem
2.8.2]{Fil:88}). Therefore the mapping~$\xi\mapsto \theta(0,\xi)$
is continuous and its image~$I_0$ is a compact subset of~$\bC$
containing~$0$ as a continuious image of a compactum~$[0,1]$. Note
further that for any~$k\in\bZ$ the solutions~$\theta_k(x,\xi)$ of
the problem~\eqref{eq:pre.eq_theta} with~$\theta(\xi)=\pi k$ are
equal to~$\theta_k(x,\xi)=\theta(x,\xi)+\pi k$. The images~$I_k$
of the mappings~$\xi\mapsto\theta_k(x,\xi)$ are compact and are
the shifts of~$I_0$ by~$\pi k$ along the real axis.

Let us now take any complex number, say~$\theta_0$, outside the
union of all the compacta~$I_k$,~$k\in\bZ$, and consider the
problem~\eqref{eq:pre.eq_theta} with~$\theta(0)=\theta_0$. In view
of the above arguments and uniqueness of the solution of the
corresponding initial-value problem, the solution
of~\eqref{eq:pre.eq_theta} with~$\theta(0)=\theta_0$ can equal
to~$\pi k$, $k\in\bZ$, at no point of the interval~$[0,1]$.
Consider the solution~$y_0$ of~$\ell(y)=0$ subject to the initial
conditions~$y(0)=1$, $y^{[1]}(0)=\cot \theta_0$. Then~$y_0$ does
not vanish on~$[0,1]$, which is the assertion of the lemma.
\end{proof}

Denote by~$y_0$ any solution of~$\ell(y)=0$ not vanishing
on~$[0,1]$ and set~$v=y_0'/y_0$. Observe that~$v\in L_2(0,1)$
and~$q=v'+v^2$,~i.e.~$q$ is a Miura potential
(see~\cite{KapPerShuTop:2005}). Then the differential
expression~$-y''+qy$ can be written in the factorized form, viz.
\begin{equation}\label{eq:Dir.Afact}
   -y''+qy  = -\Bigl(\frac{d}{dx}+v\Bigr)\Bigl(\frac{d}{dx}-v\Bigr)y.
\end{equation}

\begin{remark}\label{rem:v-r_cont}
Observe that the function~$v$ satisfies the
equality~$v-r=y_0^{[1]}/y_0$, so that~$v-r$ is a continuous
function on~$[0,1]$ and~$(v-r)(x)=\cot \theta(x)$ for
every~$x\in[0,1]$, where~$\theta$ is the Pr\"{u}fer angle
corresponding to~$y_0$.
\end{remark}

For~$\la\ne 0$ consider the functions~$u_2:=y$
and~$u_1:=(y'-vy)/\la$ and recast the spectral
equation~\eqref{eq:intr.spr} as the following first order system
for~$u_1$ and~$u_2$:
\begin{align}\label{eq:Dir.system1}
    u_2' - v u_2 &= \la u_1,\\
    -u_1'-v u_1 + 2 p u_2 &= \la u_2. \label{eq:Dir.system2}
\end{align}
Setting
\begin{equation}\label{eq:Dir.P}
    J:=\left(
           \begin{array}{cc}
             0 & 1 \\
             -1 & 0 \\
           \end{array}
         \right),
\qquad
    P:=\left(
                \begin{array}{cc}
                  0 & -v \\
                  -v & 2p \\
                \end{array}
              \right),
\qquad
    \bu(x)=\binom{u_1}{u_2},
\end{equation}
we see that the above system is the spectral problem
$\ell(P)\bu=\la\bu$ for a Dirac differential expression $\ell(P)$
acting in $L_2(0,1)\times L_2(0,1)$ via
\begin{equation}
\label{eq:dif.exp.Dirac}
    \ell(P)\bu: =J\frac{d\bu}{dx}+P\bu
\end{equation}
on the domain
\[
    \dom \ell(P)=\{\bu=(u_1,u_2)^\mathrm{t}\mid \bu\in W_2^1(0,1)\times
    W_2^1(0,1)\}.
\]

It was shown in~\cite{Pro:2011c} that the spectral
problem~\eqref{eq:intr.spr},~\eqref{eq:intr.b.c.} is closely
related to the spectral problem for the Dirac
operator~$\mathcal{D}(P)$ defined by the differential
expression~$\ell(P)$ on the domain
\[
\dom \mathcal{D}(P)=\{\bu=(u_1,u_2)^\mathrm{t}\mid
\bu\in\dom\ell(P),\; u_2(0)=u_2(1)=0\}.
\]
In particular, the nonzero spectra for both problems coincide
counting with multiplicity. Moreover,
~$\bu=(u_1,u_2)^{\mathrm{t}}$ is an eigenfunction of the
operator~$\mathcal{D}(P)$ corresponding to the eigenvalue~$\la\ne
0$ if and only if $y=u_2$ is an eigenfunction of
\eqref{eq:intr.spr},~\eqref{eq:intr.b.c.} corresponding to~$\la$
and~$u_1=(u_2'-vu_2)/\la$.

By assumption~(A),~$\la=0$ is not an eigenvalue
of~\eqref{eq:intr.spr},~\eqref{eq:intr.b.c.}; however, it is in
the spectrum of~$\mathcal{D}(P)$:

\begin{lemma}\label{lem:red.mult_0}
Under assumption~(A),~$\la=0$ is an eigenvalue of~$\mathcal{D}(P)$
of algebraic multiplicity one.
\end{lemma}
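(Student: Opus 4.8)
The plan is to determine the kernel of $\cD(P)$ explicitly, observe that $\la=0$ has geometric multiplicity one, and then show that the single eigenvector admits no associated vector in $\dom\cD(P)$; assumption~(A) enters precisely at this last step through the characteristic function. Throughout I abbreviate $g(x):=\exp\bigl(-\int_0^x v(s)\,ds\bigr)$, which is a nonvanishing $W_2^1$ function with $g(0)=1$ and $g'=-vg$.

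First I would solve $\cD(P)\bu=0$. Writing $\bu=(u_1,u_2)^{\mathrm t}$, the equation $\ell(P)\bu=0$ splits, via~\eqref{eq:Dir.system1}--\eqref{eq:Dir.system2} with $\la=0$, into $u_2'-vu_2=0$ and $-u_1'-vu_1+2pu_2=0$. The first equation together with the boundary condition $u_2(0)=0$ forces $u_2\equiv 0$ by uniqueness for the scalar initial value problem; the second then reduces to $u_1'+vu_1=0$, whose solutions are the multiples of $g$. Since $u_1$ carries no boundary constraint, $\ker\cD(P)=\operatorname{span}\{\bu^{(0)}\}$ with $\bu^{(0)}=(g,0)^{\mathrm t}$, so $\la=0$ is an eigenvalue of geometric multiplicity one.

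Because the geometric multiplicity is one, the algebraic multiplicity equals the length of the single Jordan chain, and it suffices to rule out an associated vector, i.e.\ to show that $\cD(P)\bu^{(1)}=\bu^{(0)}$ has no solution $\bu^{(1)}=(w_1,w_2)^{\mathrm t}\in\dom\cD(P)$. The first component of this system reads $w_2'-vw_2=g$; using $g$ as an integrating factor gives $(gw_2)'=g^2$, and together with $w_2(0)=0$ this determines $w_2$ uniquely as $w_2(x)=g(x)^{-1}\int_0^x g^2(s)\,ds$. The remaining boundary condition $w_2(1)=0$ thus becomes $\int_0^1 g^2(s)\,ds=0$, and the whole argument hinges on showing that assumption~(A) excludes this.

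The main obstacle, and the crux of the proof, is to identify this integral with the characteristic function at zero. I would compute the solution $y(\cdot,0)$ of $\ell(y)=0$ with $y(0)=0$, $y^{[1]}(0)=1$ by reduction of order from the factorization~\eqref{eq:Dir.Afact}: solving $(\tfrac{d}{dx}+v)w=0$ and then $(\tfrac{d}{dx}-v)y=w$ with the prescribed initial data (noting $w(0)=y^{[1]}(0)=1$ since $y(0)=0$) yields $y(x,0)=g(x)^{-1}\int_0^x g^2(s)\,ds$, whence $\varphi(0)=y(1,0)=g(1)^{-1}\int_0^1 g^2(s)\,ds$. Since $g(1)\ne 0$, assumption~(A) is equivalent to $\int_0^1 g^2\ne 0$, i.e.\ to $w_2(1)\ne 0$. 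Hence no associated vector exists in the domain, so $\ker\cD(P)^2=\ker\cD(P)$; by the standard stabilization of kernels of operator powers this propagates to $\ker\cD(P)^k=\ker\cD(P)$ for all $k$, and therefore $\la=0$ has algebraic multiplicity one.
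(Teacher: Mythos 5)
Your proof is correct and takes essentially the same route as the paper: both compute the kernel to be the span of $(e^{-\int v},0)^{\mathrm t}$ and then show that the second component $w_2$ of any associated vector would be the solution of $\ell(y)=0$ with $y(0)=0$, $y^{[1]}(0)=1$, so that the domain condition $w_2(1)=0$ would force $\varphi(0)=0$, contradicting assumption~(A). The only difference is presentational --- the paper derives $\ell w_2=0$ from the factorization $\ell=-\bigl(\tfrac{d}{dx}+v\bigr)\bigl(\tfrac{d}{dx}-v\bigr)$ and argues by dichotomy, whereas you solve for $w_2$ and $\varphi(0)$ explicitly and observe they give the same quantity $g(1)^{-1}\int_0^1 g^2$.
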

\begin{proof} A straightforward verification shows that~$\la=0$ is an eigenvalue
of~$\mathcal{D}(P)$, and that every corresponding eigenfunction
must be collinear to~$\bu_0=(u_1,u_2)^{\mathrm t}$, where
$u_1=\exp\{-\int v\}$,~$u_2\equiv 0$. Therefore the
eigenvalue~$\la=0$ is geometrically simple.

Suppose that the algebraic multiplicity of~$\la=0$ is greater than
one. Then there exists a vector~$\bw=(w_1,w_2)^{\mathrm t}$ from
the domain of~$\mathcal{D}(P)$ associated with~$\bu_0$,~i.e.
satisfying the equality~$\mathcal{D}(P)\bw=\bu_0$.
Then~$\left(\frac{d}{dx}-v\right)w_2= u_1$ and thus~$\ell
w_2=-\left(\frac{d}{dx}+v\right)\left(\frac{d}{dx}-v\right)w_2=-\left(\frac{d}{dx}+v\right)u_1=0$.
Moreover,~$\bw\in\dom \mathcal D(P)$ requires
that~$w_2(0)=w_2(1)=0$ and thus either~$w_2$ is an eigenfunction
of~\eqref{eq:intr.spr},~\eqref{eq:intr.b.c.} for the
eigenvalue~$\la=0$ or~$\bw\equiv 0$. The first possibility is
ruled out by assumption~(A), while the second one is impossible in
view of the relation~$w_2'-vw_2=u_1$. The contradiction derived
shows that no such~$\bw$ exists and finishes the proof.
\end{proof}


\section{Transformation operator}\label{sec:tr.op}


In this section we construct the so-called transformation operator
relating the solution of the system~$\ell(P)\bu=\la\bu$ and that
of~$\ell(P_0)\bu=\la\bu$ with zero matrix potential~$P_0$ (i.e.
with matrix potential having all components zero).

Denote  by~$U(x,\la)$ a $2\times 2$ matrix-valued function
satisfying the equation
\begin{equation}\label{eq:Dirac_eq}
J\frac{dU}{dx}+PU=\la U
\end{equation}
and the initial condition~$U(0)=I$.

\begin{theorem}\label{thm:tr.op}
Let~$P$ in~\eqref{eq:Dirac_eq} be of the form~\eqref{eq:Dir.P}
with~$p$ and~$v$ from~$L_2(0,1)$. Then
\begin{equation}\label{eq:U}
U(x,\la)=e^{a(x)J}+\int_0^xe^{-\la(x-2s)J}K(x,s)ds,
\end{equation}
where~$a(x)=a(x,\la)=\int_0^xp(s)ds-\la x$ and~$K$ is a
matrix-valued function such that for every~$x\in[0,1]$ the
function~$K(x,\cdot)$ is from~$L_2((0,1),\mathcal{M}_2)$.
Moreover, the mapping
\begin{equation}\label{eq:tr_op.map}
    x\mapsto K(x,\cdot)\in L_2((0,1),\mathcal{M}_2)
\end{equation}
is continuous on~$[0,1]$.
\end{theorem}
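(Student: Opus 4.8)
The plan is to construct the transformation kernel $K$ via the usual integral-equation approach for Dirac-type systems, exploiting the explicit solvability of the free problem. First I would identify the solution $U_0(x,\la)=e^{a(x)J}$ of the equation with the ``diagonal part'' $P$ removed; here one uses that $J^2=-I$ so that $e^{a(x)J}=\cos a(x)\,I+\sin a(x)\,J$, and one checks that $U_0$ satisfies $J U_0'+2pJ\binom{0\ 0}{0\ 1}\cdots$ — more precisely, that the term $2p u_2$ in $P$ is accounted for by the phase $a(x)=\int_0^x p-\la x$, while the off-diagonal $-v$ entries constitute the genuine perturbation to be absorbed into $K$. I would substitute the ansatz \eqref{eq:U} into \eqref{eq:Dirac_eq} and, after integrating by parts and changing variables in the convolution-type integral, obtain a Volterra-type integral equation for $K(x,s)$ of the form $K(x,s)=F(x,s)+\int\!\!\int \cdots$, where $F$ is an explicit inhomogeneity built from $v$, $p$ and $e^{aJ}$.

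The key technical step is to set up the integral equation in the correct pair of characteristic variables. The factor $e^{-\la(x-2s)J}$ suggests passing to coordinates adapted to the two characteristic speeds $\pm1$ of the Dirac operator (the eigenvalues of $J$ are $\pm i$), so I would split $K$ into components along the eigenprojections of $J$ and track how the substitution of \eqref{eq:U} into \eqref{eq:Dirac_eq} forces each component to satisfy a Volterra equation along its own characteristic. The crucial cancellation is that all terms carrying the factor $e^{\pm i\la(\cdots)}$ with the ``wrong'' argument must vanish identically in $\la$; this is precisely what pins down the boundary/diagonal conditions on $K$ and removes the spectral parameter $\la$ from the resulting equation for $K$. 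The outcome should be a closed $\la$-independent Volterra system whose kernel is built from $p$ and $v$.

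To solve this Volterra system and obtain the asserted regularity, I would run the standard method of successive approximations, setting $K=\sum_{n\ge0}K_n$ with $K_0=F$ and $K_{n+1}$ given by applying the Volterra operator to $K_n$. The estimates must be carried out in the mixed norm suited to the statement: for each fixed $x$ one controls $\|K_n(x,\cdot)\|_{L_2((0,1),\mathcal M_2)}$, and the Volterra structure together with the $L_2$-bounds on $p$ and $v$ yields a factorial decay $\|K_n(x,\cdot)\|_{L_2}\le C^n\bigl(\int_0^x(|p|+|v|)\bigr)^n/\sqrt{n!}$ or similar, guaranteeing uniform convergence of the series on $[0,1]$. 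The continuity of the map \eqref{eq:tr_op.map} then follows from the same estimates applied to differences $K(x_1,\cdot)-K(x_2,\cdot)$, using absolute continuity of the integrals $\int_0^x|p|$ and $\int_0^x|v|$ in $x$, which forces the $L_2$-distance of the kernels to be controlled by $\int_{x_2}^{x_1}(|p|+|v|)$.

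The main obstacle I anticipate is handling the low regularity: because $p$ and $v$ lie only in $L_2(0,1)$, the inhomogeneity $F$ and the kernel of the Volterra operator are merely $L_2$ in the relevant variable, so the successive-approximation estimates cannot be carried out in a sup-norm and must instead be done in the mixed $L_\infty_x L_2_s$ framework. Getting the Volterra iteration to close in this mixed norm — in particular verifying that one application of the integral operator maps a function that is $L_2$ in $s$ (uniformly in $x$) back into the same class while gaining the factorial smallness — is the delicate point, and it is exactly here that the $L_2$ (rather than, say, $L_1$) integrability of $p$ and $v$ must be used carefully via Cauchy--Schwarz. Once this mixed-norm Volterra estimate is in place, both the representation \eqref{eq:U} and the continuity of \eqref{eq:tr_op.map} follow routinely.
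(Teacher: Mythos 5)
Your overall strategy --- write $U$ as the free solution plus an integral term with kernel $K$, then pin $K$ down by a Volterra-type argument --- is the right family of ideas, but it diverges from the paper's proof in a way that exposes two genuine problems. First, your identification of the unperturbed part is off. The matrix $e^{a(x)J}$ with $a(x)=\int_0^xp-\la x$ satisfies $JU_0'+pU_0=\la U_0$, i.e.\ it solves the system with the \emph{scalar} potential $p\cdot I$, not with the diagonal entry $2p$ of $P$ removed. The genuine perturbation is therefore $Q=P-pI=pJ_1-vJ_2$, which still contains $p$ in its traceless diagonal part $pJ_1$; it is not just the off-diagonal $-v$ entries, as you assert. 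If you set up your Volterra equation for $K$ with only $-vJ_2$ as the inhomogeneity you will be solving the wrong equation. What actually makes the representation \eqref{eq:U} work is that this $Q$ \emph{anticommutes} with $J$, which lets every product of exponentials in the iterated integrals be collapsed into a single factor $e^{-\la(x-2s)J}$ --- that algebraic fact is the heart of the paper's argument and is absent from your sketch.

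Second, your plan to substitute the ansatz into \eqref{eq:Dirac_eq}, integrate by parts, and cancel the terms with the ``wrong'' exponential to obtain a closed $\la$-independent Goursat/Volterra problem for $K(x,s)$ presupposes enough regularity of $K$ to differentiate under the integral sign. With $p,v\in L_2(0,1)$ the kernel is only $L_2$ in $s$ for each $x$, so this classical derivation does not go through literally; one would have to formulate and solve the kernel equation in an integrated (weak) sense, and you do not address how. The paper avoids this entirely: it iterates the $\la$-dependent integral equation for $U$ itself ($U_n(x)=\int_0^xe^{(a(x)-a(s))J}JQ(s)U_{n-1}(s)\,ds$), uses the anticommutation to write each $U_n$ as $\int_0^xe^{-\la(x-2s)J}K_n(x,s)\,ds$ via the change of variables $s=\xi_n(\mathbf{t})=\sum(-1)^{n-j}t_j$ on the simplex, and then proves $\sum_n\|K_n(x,\cdot)\|_2<\infty$ by Cauchy--Schwarz on the simplex (giving $1/(n-1)!$ decay). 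Your mixed-norm $L^\infty_xL^2_s$ estimates and the continuity argument via absolute continuity of $\int_{x_1}^{x_2}|\tilde Q|^2$ are in the right spirit and match the paper's final step, but as written the construction of $K$ that they are supposed to control is not in place.
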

This theorem is very similar to the corresponding theorem
of~\cite{AlbHryMk:2005:RJMP} and its proof requires only minor
modifications.
\begin{proof}
Observe that the system~\eqref{eq:Dirac_eq} can be rewritten as
\[
J\frac{dU}{dx}+QU=(\la-p) U
\]
with
\[Q=\left(%
\begin{array}{cc}
  -p & -v \\
  -v & p \\
\end{array}%
\right)=pJ_1-v J_2, \quad J_1=\left(%
\begin{array}{cc}
  -1 & 0 \\
  0 & 1 \\
\end{array}%
\right),\quad J_2=\left(%
\begin{array}{cc}
  0 & 1 \\
  1 & 0 \\
\end{array}%
\right).
\]
The variation of constants method shows that~$U$ satisfies the
following integral equation:
\[
    U(x)=e^{a(x)J}+\int\limits_0^xe^{(a(x)-a(s))J}JQ(s)U(s)ds.
\]
This equation can be solved by the method of successive
approximation. Setting
\begin{equation}\label{eq:rec}
    U_0(x)=e^{a(x)J},\quad
U_n(x)=\int\limits_0^xe^{(a(x)-a(s))J}JQ(s)U_{n-1}(s)ds,
\end{equation}
we see that the solution of the above equation can formally be
given by the sum~$\sum_{n=0}^{\infty}U_n$. We shall prove below
that
\begin{equation}\label{eq:conv}
    \sum_{n=0}^{\infty}\|U_n\|_\infty< \infty
\end{equation}
(here~$\|U_n\|_\infty:=\sup_{x\in[0,1]}|U_n(x)|$, and~$|U_n(x)|$
is the Euclidean norm of the matrix~$U_n(x)$) whence the
series~$\sum_{n=0}^{\infty}U_n$ converges in the
space~$L_{\infty}([0,1],\mathcal{M}_2)$ to the solution of the
equation~\eqref{eq:Dirac_eq} which satisfies the initial
condition~$U(0)=I$.

Let us now prove~\eqref{eq:conv}.
Set~$\tilde{Q}(y):=\exp\{-2\int_0^tpJ\}JQ(t)$
and
\[
\mathcal{Q}_n(t_1,\dots,t_n):=\tilde{Q}(t_n)\tilde{Q}(t_{n-1})\dots
\tilde{Q}(t_1).
\]
Observe firstly that the matrix~$Q$
anti-commutes with~$J$ and therefore
\[
    e^{-tJ}\tilde{Q}(s)=\tilde{Q}(s)e^{tJ}.
\]
Using this in the relations~\eqref{eq:rec}, we obtain by induction
that
\[
    U_n(x)=e^{\int_0^xp(s)dsJ}\int\limits_{\Pi_n(x)}e^{-\la(x-2\xi_n(\mathrm{t}))J}\mathcal{Q}_n(t_1,\dots,t_n)dt_1\dots
    dt_n,
\]
where
\begin{align*}
\Pi_n(x)&=\{\mathrm{t}:=(t_1,\dots,t_n)\in \bR^n\mid 0\le
t_1\le\dots\le t_n\le x\},\\
\xi_n(\mathrm{t})&=\sum\limits_{j=1}^n(-1)^{n-j}t_j.
\end{align*}
Setting~$s=\xi_n(\mathrm{t})$, we can rewrite the equality
for~$U_n$ as
\[
    U_n(x)=\int_{0}^xe^{-\la(x-2s)J}K_n(x,s)ds,
\]
where~$K_1(x,s)\equiv e^{\int_0^xpJ}\tilde{Q}(s)$ and for~$n\ge 2$
\[
    K_n(x,s)=e^{\int_0^xpJ}\int\limits_{\Pi_{n-1}^*(x,s)}\!\mathcal{Q}_n(t_1,\dots,t_{n-1},s+\xi_{n-1}(\mathrm{t}))dt_1\dots dt_{n-1}
\]
with~$0\le s<x\le 1$ and
\[
    \Pi_{n-1}^*(x,s)=\{\mathrm{t}:=(t_1,\dots,t_{n-1})\in \bR^{n-1}\mid 0\le
t_1\le\dots\le t_{n-1}\le \xi_{n-1}(\mathrm{t})+s\le x\}.
\]
Let us estimate the~$L_2$-norm of~$K_n(x,\cdot)$:
\begin{align*}
    \|K_n(x,\cdot)\|_2^2&=\int\limits_0^1|K_n(x,s)|^2ds\\&\le
    \frac{1}{(n-1)!}\int\limits_0^1ds\!\int\limits_{\Pi_{n-1}^*(x,s)}\!e^{2\int\limits_0^x|\mathrm{Im} p|}|\mathcal{Q}_n(t_1,\dots,t_{n-1},s+\xi_{n-1}(\mathrm{t}))|^2dt_1\dots dt_{n-1}\\
    &\le\frac{1}{(n-1)!}\int\limits_{\Pi_n(x)}e^{2\int\limits_0^x|\mathrm{Im} p|}|\mathcal{Q}_n(t_1,\dots,t_n)|^2dt_{1}\dots dt_{n}\\
    &=\frac{1}{((n-1)!)^2n}e^{2\int\limits_0^x|\mathrm{Im}
    p|}\Biggl(\int\limits_0^x|\tilde{Q}|^2\Biggr)^n\le\frac{e^{2\|p\|_1}\|\tilde{Q}\|^{2n}_2}{((n-1)!)^2}
\end{align*}
where~$\|K(\cdot)\|_2:=\left(\int_0^1|K(s)|^2ds\right)^{1/2}$
and~$|K(x)|$ is the Euclidean norm of the matrix~$K(x)$.
Put~$C:=\mathrm{max}_{x\in[-1,1]}|e^{-\la x J}|$. Then
\[
    |U_n(x)|\le C \int\limits_0^x|K_n(x,s)|ds\le C\|K_n(x,\cdot)\|_2\le C \frac{e^{\|p\|_1}\|\tilde{Q}\|^{n}_2}{(n-1)!},
\]
which yields~\eqref{eq:conv}. The estimate of the
norm~$\|K_n(x,\cdot)\|_2$ implies also the convergence of the
series~$K(x,\cdot):=\sum_{n=1}^\infty K_n(x,\cdot)$
in~$L_2((0,1),\mathcal{M}_2)$ with
\[
    \|K(x,\cdot)\|_2\le \sum\limits_{n=1}^\infty \frac{e^{\|p\|_1}\|\tilde{Q}\|^n_2}{(n-1)!}
    \le \|\tilde{Q}\|_2 \exp\{\|\tilde{Q}\|_2+\|p\|_1\}.
\]
Therefore the first statement of the theorem is proved.

To prove continuity of~\eqref{eq:tr_op.map} it is enough to verify
continuity of the mapping~$x\mapsto
\exp\Bigl\{-\int\limits_0^xpJ\Bigr\}K(x,\cdot)=:\tilde{K}(x,\cdot)$.
Take~$x_1,x_2 \in [0,1]$ such that~$x_1<x_2$.
Set~$\tilde{K}_n(x,s):=\exp\Bigl\{-\int\limits_0^xpJ\Bigr\}K_n(x,\cdot)$;
then
\[
    \tilde{K}_n(x_2,s)-\tilde{K}_n(x_1,s)=
    \!\int\limits_{\Pi_{n-1}^{**}(x_1,x_2,s)}\!\mathcal{Q}_n(t_1,\dots,t_{n-1},s+\xi_{n-1}(\mathrm{t}))dt_1\dots
    dt_{n-1},
\]
where
\begin{align*}
    \Pi_{n-1}^{**}(x_1,x_2,s):=\{\mathrm{t}:=(t_1,\dots,t_{n-1})\in \bR^{n-1}\mid & \;0\le
t_1\le\dots\le t_{n-1}\le \xi_{n-1}(\mathrm{t})+s,\\& x_1\le
\xi_{n-1}(\mathrm{t})+s\le x_2\}.
\end{align*}
Therefore,
\begin{align*}
    \int\limits_0^1|\tilde{K}_n(x_2,s)-&\tilde{K}_n(x_1,s)|^2ds\\\le&
    \frac{1}{(n-1)!}\int\limits_0^1\!\int\limits_{\Pi_{n-1}^{**}(x_1,x_2,s)}\!|\mathcal{Q}_{n}(t_1,\dots,t_{n-1},s+\xi_{n-1}(\mathrm{t})|^2dt_1\dots
    dt_{n-1}ds\\ \le&
    \frac{1}{(n-1)!}\int\limits_{x_1}^{x_2}dt_n\int\limits_{\Pi_{n-1}(x_2)}|\mathcal{Q}_n(t_1,\dots,t_n)|^2dt_1\dots
    dt_{n-1}\\ \le&
    \frac{1}{((n-1)!)^2}\|\tilde{Q}\|_2^{2(n-1)}\cdot\int\limits_{x_1}^{x_2}|\tilde{Q}(t)|^2dt.
\end{align*}
This yields the estimate of the norms
\begin{equation*}\label{eq:est}
    \|\tilde{K}_n(x_2,\cdot)-\tilde{K}_n(x_1,\cdot)\|_2\le\frac{1}{(n-1)!}\|\tilde{Q}\|_2^{n-1}\left[\int\limits_{x_1}^{x_2}|\tilde{Q}(t)|^2dt\right]^{1/2}
\end{equation*}
and so
\[
    \|\tilde{K}(x_1,\cdot)-\tilde{K}(x_2,\cdot)\|\le C\left[\int\limits_{x_1}^{x_2}|\tilde{Q}(t)|^2dt\right]^{1/2}\exp\{\|\tilde{Q}\|\}
\]
with some constant~$C$ depending on~$p$. This shows that  the
mapping~$x\mapsto \tilde{K}(x,\cdot)$ is continuous from~$[0,1]$
to~$L_2((0,1),\mathcal{M}_2)$. The proof is complete.
\end{proof}

Observe that the~$2\times 2$ matrix~$U_0=e^{-\la xJ}$ is a
solution of the system~$\ell(P_0)U=\la U$ with zero
potential~$P_0$. Therefore, in view of the last theorem, the
solution~$U$ of the problem~\eqref{eq:Dirac_eq} can be obtained
from~$U_0$ by means of the transformation
operator~$\mathscr{T}=\mathscr{R}+\mathscr{K}$,
where~$\mathscr{R}$ is an operator of multiplication
by~$\exp\{J\int_0^xp\}$ and~$\mathscr{K}$ is an integral operator
acting as follows
\[
    \mathscr{K}f(x)=\int_0^xf(x-2s)K(x,s)ds.
\]
This transformation operator also performs similarity of the
corresponding differential expressions, namely,
\[
    \ell(P)\mathscr{T}=\mathscr{T}\ell(P_0).
\]


\section{Asymptotics}\label{sec:asy}


In this section, we derive the asymptotics of eigenvalues and
eigenfunctions and the corresponding norming constants of the
problem~\eqref{eq:intr.spr} under the Dirichlet boundary
conditions~\eqref{eq:intr.b.c.}. We also obtain the factorization
of the characteristic function for the problem under study.

\subsection{Asymptotics of the eigenvalues}

Consider the vector~$\mathbf{u}(x,\la)=U(x,\la)
(1,0)^\mathrm{t}=\left(u_1(x,\la), u_2(x,\la)\right)^{\mathrm t}$.
In view of~\eqref{eq:U}, the second component~$u_2(x,\la)$ of the
vector~$\mathbf{u}(x,\la)$ is given by
\begin{equation*}
u_2(x,\la)=-\sin a(x)+\int_0^x\!
k_{11}(x,s)\sin(\la(x-2s))ds+\int_0^x\!k_{21}(x,s)\cos(\la(x-2s))ds.
\end{equation*}
Observe that the function~$u_2(x,\la)$ solves
equation~\eqref{eq:intr.spr} and satisfies the initial
condition~$u_2(0,\la)=0$. However,
\[
    u_2^{[1]}(0,\la)=\la(u_1(0,\la)+cu_2(0,\la))=\la,
\]
where~$c=(v-r)(0)$ (see Remark~\ref{rem:v-r_cont}).
Therefore~$\varphi(\la)=u_2(1,\la)/\la$ is the characteristic
function for the spectral
problem~\eqref{eq:intr.spr},~\eqref{eq:intr.b.c.}.

Further observe that~$u_2(1,\la)$ can be written as
\begin{equation}\label{eq:ch_f}
    u_2(1,\la)=\sin\left(\la-p_0\right) +\int_0^1\!f(s)e^{i\la(1-2s)}ds
\end{equation}
with~$f(s)=\frac{1}{2}\left[k_{21}(1,s)+k_{21}(1,1-s)-ik_{11}(1,s)+ik_{11}(1,1-s)\right]$;
recall also that~$p_0:=\int_0^1\!p(s)ds$. Let us make the change
of variables~$z:=\la-p_0$ and consider the function
\[
     \delta(z)=\sin z +\int_0^1\!\tilde{f}(s)e^{iz(1-2s)}ds,
\]
where~$\tilde{f}(s)=f(s)e^{ip_0(1-2s)}$.
Clearly,~$\delta(z)=u_2\left(1,z+p_0\right)$. By Theorem~4
of~\cite{LevOst:79}, the zeros of~$\delta(z)$ can be labelled
according to their multiplicities as~$z_n$,~$n\in\bZ$, so
that~$z_n=\pi n+\tilde{\la}_n$, where the
sequence~$(\tilde{\la}_n)_{n\in\bZ}$ belongs to~$\ell_2(\bZ)$.
Hence the zeros of the function~$u_2(1,\la)$ can be labelled
according to their multiplicities as~$\la_n$,~$n\in\bZ$, so that
\[
    \la_n=\pi n+p_0+\tilde{\la}_n.
\]
 In view of this asymptotics, all but
finitely many zeros of~$u_2(1,\la)$ are simple.

Next note that~$u_2(1,\la)$ is a characteristic function of the
operator~$\mathcal{D}(P)$ defined in Section~\ref{sec:red},
whence~$\la=0$ is a zero of~$u_2(1,\la)$ of order 1 (see
Lemma~\ref{lem:red.mult_0}). However, under assumption~(A) it is
not a zero of the characteristic function~$\varphi(\la)$.
Therefore the set of all the eigenvalues of the
problem~\eqref{eq:intr.spr},~\eqref{eq:intr.b.c.} coincides with
the set of zeros of the function~$u_2(1,\la)$ different from~$0$.
Thus the following theorem holds true.

\begin{theorem}
The eigenvalues of the
problem~\eqref{eq:intr.spr},~\eqref{eq:intr.b.c.} can be labelled
according to their multiplicities as~$\la_n$
with~$n\in\bZ^*:=\bZ\setminus\{0\}$  so that
\begin{equation}\label{eq:asym_la}
    \la_n=\pi n+p_0+\tilde{\la}_n
\end{equation}
with an~$\ell_2$-sequence~$(\tilde{\la}_n)$.
\end{theorem}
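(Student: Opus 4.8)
The analytic substance needed is already in hand, so the plan is mainly to assemble the preceding facts and then to deal carefully with the single spurious zero at the origin. First I would record the consequences of the representation \eqref{eq:ch_f}: since the kernel $K(1,\cdot)$ lies in $L_2((0,1),\mathcal M_2)$ by Theorem~\ref{thm:tr.op}, the function $f$, and hence $\tilde f(s)=f(s)e^{ip_0(1-2s)}$, belongs to $L_2(0,1)$. This is precisely the hypothesis under which Theorem~4 of \cite{LevOst:79} applies to $\delta(z)=\sin z+\int_0^1\tilde f(s)e^{iz(1-2s)}\,ds$, yielding a labelling of the zeros of $\delta$, counted with multiplicity, as $z_n=\pi n+\tilde\la_n$ with $(\tilde\la_n)_{n\in\bZ}\in\ell_2(\bZ)$. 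Translating back through $z=\la-p_0$, the zeros of $u_2(1,\cdot)$ are the numbers $\la_n=\pi n+p_0+\tilde\la_n$, $n\in\bZ$, again counted with multiplicity.

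Next I would pin down the status of the origin. By Lemma~\ref{lem:red.mult_0}, $\la=0$ is a zero of $u_2(1,\cdot)$ of order exactly one, while assumption~(A) guarantees that it is not a zero of $\varphi$ and hence not an eigenvalue; combined with the spectral equivalence with $\mathcal D(P)$ recalled in Section~\ref{sec:red}, this shows that the eigenvalues of \eqref{eq:intr.spr},\eqref{eq:intr.b.c.} are precisely the zeros of $u_2(1,\cdot)$ other than $0$, with matching multiplicities. Since $0$ is a simple zero, there is a unique index $n_0\in\bZ$ with $\la_{n_0}=0$, and the set of eigenvalues is exactly $\{\la_n:n\in\bZ,\ n\neq n_0\}$.

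It then remains to re-index this set by $\bZ^*$ while preserving the asymptotic form \eqref{eq:asym_la}. To do so I would fix any bijection $\tau\colon\bZ^*\to\bZ\setminus\{n_0\}$ that agrees with the identity outside a finite set (for instance, a single block shift of the finitely many indices lying between $0$ and $n_0$), and set $\la_k':=\la_{\tau(k)}$ for $k\in\bZ^*$. Writing $\la_k'=\pi k+p_0+\tilde\la_k'$ with $\tilde\la_k':=\pi(\tau(k)-k)+\tilde\la_{\tau(k)}$, I note that $\tau(k)-k$ vanishes for all but finitely many $k$, so the first term contributes only finitely many nonzero entries, while $(\tilde\la_{\tau(k)})_{k\in\bZ^*}$ is a finite modification of a rearrangement of the $\ell_2$-sequence $(\tilde\la_n)$ and is therefore itself in $\ell_2$. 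Hence $(\tilde\la_k')\in\ell_2$, which is the required conclusion after renaming $\la_k',\tilde\la_k'$ as $\la_k,\tilde\la_k$. The only point needing genuine care is this last bookkeeping step---ensuring that excising the simple zero at the origin and shifting indices does not destroy square-summability---whereas the hard analytic input, namely the integral representation and the zero-distribution theorem of \cite{LevOst:79}, is already established.
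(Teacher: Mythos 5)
Your proof is correct and follows essentially the same route as the paper: the integral representation \eqref{eq:ch_f}, Theorem~4 of \cite{LevOst:79} applied to $\delta(z)=u_2(1,z+p_0)$, and Lemma~\ref{lem:red.mult_0} together with assumption~(A) to identify the eigenvalues with the zeros of $u_2(1,\cdot)$ other than the simple zero at the origin. The only difference is that you spell out the re-indexing from $\bZ$ to $\bZ^*$ explicitly (via a bijection that is the identity off a finite set), a bookkeeping step the paper leaves implicit; this is a harmless and welcome addition.
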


Next we construct the factorization of the characteristic
function~$\varphi(\la)$, which allows to determine~$\varphi(\la)$
via the eigenvalues of~\eqref{eq:intr.spr},~\eqref{eq:intr.b.c.}.
We use this factorization to derive the formula determining the
norming constants of~\eqref{eq:intr.spr},~\eqref{eq:intr.b.c.} via
two spectra of the equation~\eqref{eq:intr.spr} under two types of
boundary conditions (see \cite{Pro:2012pro}).

\begin{theorem}\label{thm:fact} Suppose that~$\la_n$,~$n\in\bZ^*$, are the eigenvalues of the spectral
problem~\eqref{eq:intr.spr},~\eqref{eq:intr.b.c.}. Then the
characteristic function~$\varphi(\la)$ can be factorized in the
following way:
\begin{equation*}
    \varphi(\la)=\left\{\begin{array}{ll}
      \mathrm{V.p.}\!\prod\limits_{\substack{{n=-\infty}\\{n\ne 0}}}^{\infty}\dfrac{\la_n-\la}{\pi n}, &\text{ if }  p_0\ne \pi l,\; l\in \bZ, \\
     (-1)^l \;\mathrm{V.p.}\!\prod\limits_{\substack{{n=-\infty}\\{n\ne 0}}}^{\infty}\dfrac{\la_n-\la}{\pi n}, &\text{ if }  p_0= \pi l,\; l\in \bZ.                  \end{array}\right.
\end{equation*}
\end{theorem}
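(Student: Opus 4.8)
The plan is to establish the factorization by treating $\varphi(\la)$ as an entire function of order $1$ and applying the Hadamard factorization theorem, with the infinite product organized so that the value-at-zero normalization comes out correctly. The starting point is the representation $\varphi(\la)=u_2(1,\la)/\la$, where by~\eqref{eq:ch_f} we have $u_2(1,\la)=\sin(\la-p_0)+\int_0^1 f(s)e^{i\la(1-2s)}\,ds$. Since $f\in L_2(0,1)$, the integral term is an entire function of exponential type $1$ whose restriction to the real axis decays (by the Riemann--Lebesgue lemma), so $u_2(1,\la)$ has the same leading asymptotics as $\sin(\la-p_0)$ and is itself of exponential type $1$ and order $1$. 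Dividing by $\la$ keeps $\varphi$ entire (using assumption~(A) that $\la=0$ is not a zero of $\varphi$, together with the fact from Lemma~\ref{lem:red.mult_0} that $u_2(1,\la)$ has a simple zero at the origin) and of order $1$.

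\emph{Next} I would invoke Hadamard's theorem. An entire function of order $1$ with zeros $\{\la_n\}_{n\in\bZ^*}$ admits the representation
\[
    \varphi(\la)=\varphi(0)\,e^{c\la}\,\mathrm{V.p.}\!\prod_{\substack{n=-\infty\\ n\ne 0}}^{\infty}\Bigl(1-\frac{\la}{\la_n}\Bigr),
\]
where the principal-value (symmetric) product converges because the asymptotics $\la_n=\pi n+p_0+\tilde\la_n$ with $(\tilde\la_n)\in\ell_2$ guarantee that $\sum_{n}(1/\la_n+1/\la_{-n})$ converges after symmetric pairing. Rewriting each factor as $(\la_n-\la)/\la_n$ and folding the constants $\varphi(0)$, the product $\prod(\la_n/(\pi n))^{-1}\!\cdot$ normalization, and $e^{c\la}$ into the target form $\prod(\la_n-\la)/(\pi n)$ is the bookkeeping that produces the claimed formula; the two cases for $p_0$ arise precisely from evaluating the normalizing constant.

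\emph{The main obstacle} I anticipate is fixing the overall multiplicative constant and the exponential factor $e^{c\la}$, i.e.\ showing $c=0$ and identifying the correct prefactor, which is exactly what distinguishes the two cases $p_0\ne\pi l$ and $p_0=\pi l$. To pin these down I would compare the asymptotic behaviour of $\varphi(\la)$ along a suitable ray (say $\la=it$, $t\to+\infty$) against that of the candidate product. On one hand, from $\varphi(\la)=\sin(\la-p_0)/\la+o(\cdot)$ one reads off the growth of $\varphi(it)$; on the other, the reference product with nodes at $\pi n$ is essentially $\sin(\pi\la)/(\pi\la)$-like, whose asymptotics are classical, and the perturbation $\la_n-\pi n=p_0+\tilde\la_n$ shifts the product by a computable factor. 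Matching the two forces $c=0$ and produces the constant $1$ when $p_0\ne\pi l$ and the sign $(-1)^l$ when $p_0=\pi l$ (the latter case being delicate because then $\sin(\la-p_0)=\pm\sin\la$ vanishes at the integer nodes and the symmetric limit must be handled carefully). I would carry out this asymptotic matching via the standard comparison with the sine product, using the $\ell_2$-control on $(\tilde\la_n)$ to ensure the correction converges, which is the technical heart of the argument.
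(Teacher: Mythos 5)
Your proposal follows essentially the same route as the paper: Hadamard factorization of the exponential-type-one function $u_2(1,\la)$ (with its simple zero at the origin supplied by Lemma~\ref{lem:red.mult_0}), absorption of the convergence factors $e^{\la/\la_n}$ via the symmetric convergence of $\sum 1/\la_n$ coming from the asymptotics~\eqref{eq:asym_la}, and determination of the exponential factor and the normalizing constant by comparing with $\sin(\la-p_0)$ along a ray, which is exactly how the paper separates the two cases for $p_0$. One small caution: a single fixed ray such as $\la=it$ only controls $\mathrm{Im}\,c$ (since $|e^{cit}|=e^{-t\,\mathrm{Im}\,c}$), so, as in the paper, the direction $\theta$ of the ray must be chosen depending on the hypothetical nonzero $c$ so that $\mathrm{Re}(c\,re^{i\theta})\to\infty$; the paper also isolates the two convergence facts you gesture at as Lemmas~\ref{lem:prod1} and~\ref{lem:prod2}.
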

\begin{proof}
Suppose firstly that~$p_0\ne \pi l$,~$l\in\bZ$. Observe that the
function~$u_2(1,\la)$ is given by~\eqref{eq:ch_f} and so it is of
exponential type~$1$. Recall also that, by
Lemma~\ref{lem:red.mult_0},~$\la=0$ is a zero of~$u_2(1,\la)$ of
order 1. Therefore by Hadamard factorization theorem (see e.g.\
\cite{You:2001}) we have\footnote{Here and hereafter all infinite
products and sums are understood in the principal value sense and
the symbol~$\mathrm{V.p.}$ will be omitted.}
\[
    u_2(1,\la)=\la e^{A\la+B}\prod_{\substack{{n=-\infty}\\{n\ne
    0}}}^{\infty}\left(1-\frac{\la}{\la_n}\right)e^{\frac{\la}{\la_n}},
\]
where~$A$ and~$B$ are some constants and~$\la_n$ are the zeros of
the function~$u_2(1,\la)$. In view of the asymptotic
distribution~\eqref{eq:asym_la},  the
series~$\sum\frac{\la}{\la_n}$ converges. Indeed,
\begin{align*}
   \sum\limits_{\substack{{n=-\infty}\\{n\ne
    0}}}^\infty\frac{1}{\la_n}&=\sum_{n=1}^\infty\left(\frac{1}{\la_n}+\frac{1}{\la_{-n}}\right)=\sum_{n=1}^\infty
    \frac{\pi^2
    n^2}{\la_n\la_{-n}}\cdot\frac{\la_n+\la_{-n}}{\pi^2n^2},
\end{align*}
and we note that the
series~$\sum_{n=1}^\infty\frac{\la_n+\la_{-n}}{\pi^2n^2}$ is
absolutely convergent and the
sequence~$\bigl(\frac{\pi^2n^2}{\la_n\la_{-n}}\bigr)$ is uniformly
bounded.  Therefore we can write
\[
    u_2(1,\la)=\la e^{A'\la+B}\prod_{\substack{{n=-\infty}\\{n\ne
    0}}}^{\infty}\biggl(1-\frac{\la}{\la_n}\biggr)
\]
with some constant~$A'$.

 To find the values~$A'$ and~$B$, consider the
ratio~$\frac{u_2(1,\la)}{\sin (\la-p_0)}$ and find its limits
along the ray~$\la=re^{i\theta}$,~$\theta\ne 0,\pi$. In view
of~\eqref{eq:ch_f} and a refined version of the Riemann--Lebesgue
lemma~\cite[Lemma 1.3.1]{Mar:77}, we have
\begin{equation}\label{eq:rel1}
    \frac{u_2(1,re^{i\theta})}{\sin (re^{i\theta}-p_0)}=1+o(1),\quad
    r\rightarrow \infty.
\end{equation}

Recall (see e.g.\ \cite{You:2001}) that the
function~$\sin(\la-p_0)$ can be factorized  as follows:
\[
    \sin(\la-p_0)=(\la-p_0)\prod_{\substack{{n=-\infty}\\{n\ne
    0}}}^{\infty}\frac{\nu_n-\la}{\pi n},
\]
where~$\nu_n$ are the zeros of~$\sin(\la-p_0)$, i.e.\ ~$\nu_n=\pi
n+p_0$. Therefore we have
\begin{align}\label{eq:rel2}
\frac{u_2(1,\la)}{\sin (\la-p_0)}&=\frac{\la
e^{A'\la+B}}{\la-p_0}\cdot\prod_{\substack{{n=-\infty}\\{n\ne
    0}}}^{\infty}\frac{\pi n}{\la_n}\cdot\frac{\la_n-\la}{\nu_n-\la}.
\end{align}

Let us show that~$A'=0$. If~$A$ were not~$0$, then one could
choose the direction~$\theta$ such that~$\mathrm{Re} A'
re^{i\theta}$ tends to infinity as~$r\rightarrow\infty$. Next note
that by Lemma~\ref{lem:prod1} given below the
product~$\prod\limits_{\substack{{n=-\infty}\\{n\ne
0}}}^{\infty}\frac{\pi n}{\la_n}$ is convergent and by
Lemma~\ref{lem:prod2} the
product~$\prod\limits_{\substack{{n=-\infty}\\{n\ne
0}}}^{\infty}\frac{\la_n-r e^{i\theta}}{\nu_n-re^{i\theta}}$
converges to~$1$ as~$r\rightarrow\infty$ and~$\theta\ne0,\pi$.
These arguments together with~\eqref{eq:rel1} and~\eqref{eq:rel2}
give a contradiction. Thus~$A'=0$ and
\[
e^B\prod_{\substack{{n=-\infty}\\{n\ne
    0}}}^{\infty}\frac{\pi n}{\la_n}=1,
\]
yielding that
\[
\varphi(\la)=\prod\limits_{\substack{{n=-\infty}\\{n\ne
0}}}^{\infty}\dfrac{\la_n-\la}{\pi n}.
\]

If~$p_0=\pi l$ for some $l\in\bZ$, then
\[
    \sin(\la-p_0)=(-1)^l\la \prod\limits_{\substack{{n=-\infty}\\{n\ne 0}}}^{\infty}\frac{\pi n-\la}{\pi
    n},
\]
and so
\[
    \frac{u_2(1,\la)}{\sin (\la-p_0)}=(-1)^le^{A'\la+B}\prod\limits_{\substack{{n=-\infty}\\{n\ne 0}}}^{\infty}\frac{\pi n}{\la_n}\prod\limits_{\substack{{n=-\infty}\\{n\ne 0}}}^{\infty}\frac{\la_n-\la}{\pi
    n-\la}.
\]
Using this,~\eqref{eq:rel1} and the arguments analogous to above,
we obtain that~$A'=0$
and~$e^B=(-1)^l\prod\limits_{\substack{{n=-\infty}\\{n\ne
0}}}^{\infty}\frac{\la_n}{\pi n}$. Thus for~$p_0=\pi l$
\[
    \varphi(\la)=(-1)^l\prod\limits_{\substack{{n=-\infty}\\{n\ne 0}}}^{\infty}\dfrac{\la_n-\la}{\pi n}.
\]
The proof is complete.
\end{proof}

\begin{lemma}\label{lem:prod1}
The product~$\prod\limits_{\substack{{n=-\infty}\\{n\ne
0}}}^{\infty}\frac{\la_n}{\pi n}$ is convergent.
\end{lemma}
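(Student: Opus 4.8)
The plan is to exploit the principal-value (symmetric) interpretation of the product, which is essential here because the individual factors are only conditionally summable. Writing the $n$-th factor as
\[
    \frac{\la_n}{\pi n}=1+\frac{p_0+\tilde{\la}_n}{\pi n},
\]
one sees immediately from the asymptotics~\eqref{eq:asym_la} that $\sum_{n\ne 0}\frac{\tilde{\la}_n}{\pi n}$ converges absolutely (by the Cauchy--Schwarz inequality, since both $(\tilde{\la}_n)$ and $(1/\pi n)$ lie in $\ell_2$), but that the term $\sum_{n\ne 0}\frac{p_0}{\pi n}$ is only conditionally convergent. Thus the naive sum of the logarithms need not converge term by term, and one cannot apply the elementary absolute-convergence criterion for infinite products directly.

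The way around this obstacle is to pair the factors indexed by $n$ and $-n$, which is exactly what the $\mathrm{V.p.}$ prescription does. First I would form the paired factor
\[
    \frac{\la_n}{\pi n}\cdot\frac{\la_{-n}}{-\pi n}
    =\Bigl(1+\frac{p_0+\tilde{\la}_n}{\pi n}\Bigr)\Bigl(1-\frac{p_0+\tilde{\la}_{-n}}{\pi n}\Bigr)
    =1+\frac{\tilde{\la}_n-\tilde{\la}_{-n}}{\pi n}-\frac{(p_0+\tilde{\la}_n)(p_0+\tilde{\la}_{-n})}{\pi^2 n^2}.
\]
The key cancellation is that the linear contribution of $p_0$ drops out: the coefficient of $\frac{1}{\pi n}$ reduces to $\tilde{\la}_n-\tilde{\la}_{-n}$, which is again an $\ell_2$-sequence.

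It then remains to verify that this paired factor has the form $1+c_n$ with $\sum_{n\ge 1}|c_n|<\infty$. For the first correction term, $\frac{\tilde{\la}_n-\tilde{\la}_{-n}}{\pi n}$ is a product of two $\ell_2$-sequences, hence lies in $\ell_1$ by Cauchy--Schwarz. For the quadratic term, the factors $p_0+\tilde{\la}_{\pm n}$ are bounded (because $\tilde{\la}_n\to 0$), so the whole expression is $O(1/n^2)$ and therefore summable. Consequently $\sum_{n\ge 1}|c_n|<\infty$, and by the standard criterion the product $\prod_{n\ge 1}(1+c_n)$ converges. Since the symmetric partial products $\prod_{n=1}^{N}\frac{\la_n}{\pi n}\cdot\frac{\la_{-n}}{-\pi n}$ are precisely the partial products of $\prod_{n\ge 1}(1+c_n)$, this establishes convergence of $\mathrm{V.p.}\!\prod_{n\ne 0}\frac{\la_n}{\pi n}$, as claimed. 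The only delicate point is the pairing step; once the $p_0$-term is seen to cancel, the rest is a routine absolute-convergence estimate.
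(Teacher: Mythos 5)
Your proof is correct, but it takes a different route from the one in the paper. The paper's argument works with the full two-sided sequence $a_n=(p_0+\tilde{\la}_n)/(\pi n)$ and invokes a complex-valued version of a lemma from Hryniv's work, namely the estimate $\bigl|\ln\prod(1+a_n)\bigr|\le\bigl|\sum a_n\bigr|+\delta\sum a_n^2$ valid whenever $(a_n)\in\ell_2$, the (principal-value) sum $\sum a_n$ converges, and the factors stay away from zero; the cancellation of the $p_0$-contribution then happens at the level of the sum of logarithms, via $\mathrm{V.p.}\sum p_0/(\pi n)=0$. You instead implement the same cancellation multiplicatively, pairing the factors indexed by $n$ and $-n$ so that the paired factor is $1+c_n$ with $(c_n)\in\ell_1$, and then appeal only to the elementary absolute-convergence criterion for products. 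Your version is more self-contained (no logarithm lemma needed) and exploits the symmetry of the index set directly, which is exactly what the $\mathrm{V.p.}$ convention permits; the paper's version is slightly more general in that it yields a quantitative bound on $\bigl|\ln\prod(1+a_n)\bigr|$ for any conditionally summable $\ell_2$-perturbation, not just symmetric ones. The only point worth making explicit in your write-up is that each paired factor $\la_n\la_{-n}/(-\pi^2n^2)$ is nonzero --- which holds because assumption (A) guarantees $\la_n\ne 0$ for all $n$ --- so that the convergence you obtain is convergence to a nonzero limit, as required for an infinite product.
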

\begin{proof}
We firstly prove the result analogous to Lemma~3.1.
of~\cite{Hry:2010} for sequences of complex numbers. For
every~$\varepsilon>0$,
set~$\delta=\delta(\varepsilon):=\sup\limits_{|z+1|\ge
\varepsilon}\bigl|\frac{z-\ln(1+z)}{z^2}\bigr|$. Note
that~$\delta$ is finite because the
function~$f(z):=\frac{z-\ln(1+z)}{z^2}$ is analytic in the
domain~$D:=\{z\in\bC\mid |1+z|\ge\varepsilon\}$ and tends to zero
as~$|z|\rightarrow\infty$. Using arguments analogous to those in
the proof of the mentioned lemma, we obtain that if~$(a_n)$ is a
sequence of complex numbers from~$\ell_2$ such that~$\sum a_n$
converges and~$|1+a_n|\ge \varepsilon$ for all~$n\in\bZ$, then
\[
    \biggl|\ln
    \prod_{-\infty}^{\infty}(1+a_n)\biggr|\le\biggl|\sum_{-\infty}^{\infty}a_n\biggr|+\delta\sum_{-\infty}^{\infty}a_n^2.
\]

Using arguments analogous to those in the proof of  one can show
that for each~$\varepsilon>0$, there
exists~$\delta=\delta(\varepsilon)$ such that whenever the
sequence~$(a_n)$ of numbers from~$\ell_2(\bZ)$ with~$|\sum
a_n|<\infty$ satisfies the inequality~$|a_n+1|>\varepsilon$ for
all~$n\in\bZ$, the following estimate is valid

Therefore, for~$\varepsilon$ such that~$\bigl|\frac{\la_n}{\pi
n}\bigr|>\varepsilon$
\[
    \biggl|\ln
    \prod\limits_{\substack{{n=-\infty}\\{n\ne
0}}}^{\infty}\frac{\la_n}{\pi
n}\biggr|\le\biggl|\sum_{\substack{{n=-\infty}\\{n\ne
0}}}^{\infty}\biggl(\frac{p_0}{\pi n}+\frac{\tilde\la_n}{\pi
n}\biggr)\biggr|+\delta\sum_{\substack{{n=-\infty}\\{n\ne
0}}}^{\infty}\biggl(\frac{p_0+\tilde\la_n}{\pi n}\biggr)^2
\]
with~$\delta:=\max\limits_{|z+1|>\varepsilon}\bigl|\frac{z-\ln(1+z)}{z^2}\bigr|$.

Observe that~$\mathrm{V.p.}\sum\frac{p_0}{\pi n}=0$. The
sequences~$(\tilde\la_n)$ and~$(1/n)$ are from~$\ell_2$, so that
the series~$\sum\frac{\tilde\la_n}{\pi n}$ is convergent; as a
result, the first summand in the righthand side of the last
estimate is finite. Since the sequence~$({1}/{\pi n})$ is
from~$\ell_2$ and~$\bigl(\frac{p_0+\tilde\la_n}{\pi
n}\bigr)^2\le\frac{C}{\pi^2n^2}$
with~$C:=\sup\limits_{n}(p_0+\tilde\la_n)^2$, the second summand
is finite as well. All these arguments imply that the
product~$\prod\limits_{\substack{{n=-\infty}\\{n\ne
0}}}^{\infty}\frac{\la_n}{\pi n}$ is convergent and so complete
the proof.
\end{proof}

\begin{lemma}\label{lem:prod2}
The product~$\prod\limits_{\substack{{n=-\infty}\\{n\ne
    0}}}^{\infty}\frac{\la_n-re^{i\theta}}{\nu_n-re^{i\theta}}$
converges to~$1$, as~$r\rightarrow\infty$ with~$\theta\ne 0,\pi$.
\end{lemma}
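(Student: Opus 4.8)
The plan is to reduce the claim to the logarithmic estimate established in the proof of Lemma~\ref{lem:prod1}. Since $\la_n=\pi n+p_0+\tilde\la_n$ and $\nu_n=\pi n+p_0$, we have $\la_n-\nu_n=\tilde\la_n$, so each factor can be written as
\[
    \frac{\la_n-re^{i\theta}}{\nu_n-re^{i\theta}}=1+b_n(r),\qquad b_n(r):=\frac{\tilde\la_n}{\nu_n-re^{i\theta}}.
\]
First I would record the geometric fact that drives the whole argument: the imaginary part of $\nu_n-re^{i\theta}=\pi n+p_0-re^{i\theta}$ equals $\operatorname{Im}p_0-r\sin\theta$ for \emph{every} $n$, and since $\theta\ne 0,\pi$ we have $\sin\theta\ne 0$. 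Hence, writing $d(r):=|r\sin\theta-\operatorname{Im}p_0|$, we obtain the uniform lower bound $|\nu_n-re^{i\theta}|\ge d(r)$ for all $n$, with $d(r)\to\infty$ as $r\to\infty$.

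Next I would verify the hypotheses needed to apply the estimate $\bigl|\ln\prod(1+b_n)\bigr|\le\bigl|\sum b_n\bigr|+\delta\sum|b_n|^2$. Because $(\tilde\la_n)\in\ell_2\subset\ell_\infty$, the bound $|b_n(r)|\le\|\tilde\la\|_\infty/d(r)$ shows that $\sup_n|b_n(r)|\to 0$, so for all sufficiently large $r$ one has $|1+b_n(r)|\ge\tfrac12$ uniformly in $n$; thus $\varepsilon=\tfrac12$ may be fixed and with it the constant $\delta=\delta(\varepsilon)$. The crucial quantitative input is the decay bound
\[
    \sum_{n\ne 0}\frac{1}{|\nu_n-re^{i\theta}|^2}=O\!\left(\frac{1}{d(r)}\right),
\]
which I would obtain by dominating the sum by its largest term $d(r)^{-2}$ plus the integral $\int_{-\infty}^{\infty}\bigl((\pi t+A)^2+d(r)^2\bigr)^{-1}\,dt=1/d(r)$, valid since the summand is a unimodal function of $n$; here $A:=\operatorname{Re}p_0-r\cos\theta$. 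Combining this with the Cauchy--Schwarz inequality gives
\[
    \Bigl|\sum_{n\ne 0}b_n(r)\Bigr|\le\sum_{n\ne 0}|b_n(r)|\le\|\tilde\la\|_2\Bigl(\sum_{n\ne 0}\frac{1}{|\nu_n-re^{i\theta}|^2}\Bigr)^{1/2}=O\!\left(d(r)^{-1/2}\right),
\]
while directly $\sum_{n\ne 0}|b_n(r)|^2\le\|\tilde\la\|_2^2/d(r)^2$.

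Both quantities tend to zero as $r\to\infty$, so the logarithmic estimate yields $\bigl|\ln\prod_{n\ne0}(1+b_n(r))\bigr|\to 0$, whence the product converges to $1$, which is the assertion of the lemma. I expect the only genuinely delicate point to be the decay estimate for $\sum|\nu_n-re^{i\theta}|^{-2}$, and in particular the observation that it is the transversality condition $\theta\ne 0,\pi$ --- forcing the uniform lower bound $d(r)$ on all the distances $|\nu_n-re^{i\theta}|$ --- that makes both $\sum b_n(r)$ and $\sum|b_n(r)|^2$ vanish in the limit. Everything else is routine once the uniform separation from the real line is in hand.
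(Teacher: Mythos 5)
Your proof is correct, and it shares the paper's overall skeleton: write each factor as $1+b_n(r)$ with $b_n(r)=\tilde\la_n/(\nu_n-re^{i\theta})$ and show that the sum of the logarithms tends to zero. The mechanism by which you force that limit, however, is different. The paper bounds $|\nu_n-re^{i\theta}|$ from below by $\tfrac12|\pi n\sin\theta|$ for $|n|$ large --- a bound independent of $r$ --- so that the series of logarithms is dominated, uniformly in $r>R_\theta$, by the convergent series $\tfrac{2}{\pi|\sin\theta|}\sum|\tilde\la_n|/|n|$, and then passes to the limit term by term. You instead use the complementary lower bound $|\nu_n-re^{i\theta}|\ge d(r)=|r\sin\theta-\operatorname{Im}p_0|$, which is independent of $n$ and tends to infinity, together with the integral-comparison estimate $\sum_n|\nu_n-re^{i\theta}|^{-2}=O(1/d(r))$ and Cauchy--Schwarz, to obtain the explicit rates $\sum|b_n(r)|=O(d(r)^{-1/2})$ and $\sum|b_n(r)|^2=O(d(r)^{-2})$, and you then invoke the logarithmic inequality established inside the proof of Lemma~\ref{lem:prod1}. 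Your route is more quantitative (it yields a rate at which the product approaches $1$), at the cost of the unimodal sum-versus-integral comparison, which is standard and correctly applied; the paper's dominated-convergence argument is softer and needs only $|\ln(1+z)|\le|z|$ for $|z|\le 1/2$. Both arguments are complete, and both isolate the same essential point, namely that $\theta\ne 0,\pi$ keeps the ray $re^{i\theta}$ uniformly away from the lattice $\nu_n$.
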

\begin{proof} Consider the series
\begin{equation}\label{eq:ser_ln}
    \sum\limits_{\substack{{n=-\infty}\\{n\ne
    0}}}^{\infty}\ln\frac{\la_n-re^{i\theta}}{\nu_n-re^{i\theta}}=\sum_{\substack{{n=-\infty}\\{n\ne
    0}}}^{\infty}\ln\biggl(1+\frac{\tilde{\la}_n}{\nu_n-re^{i\theta}}\biggr).
\end{equation}
One can find~$N$ sufficiently large such
that~$|\tilde{\la}_n|<1/2$ if~$|n|>N$. Also, for all~$r>R_\theta$
with~$R_{\theta}=(1+|p_0|)/\sin \theta$ we
have~$|\nu_n-re^{i\theta}|>1$. Since~$|\ln(1+z)|\le |z|$
if~$|z|\le 1/2$, for such~$n$ and~$r$
\[
    \biggl|\ln\biggl(1+\frac{\tilde{\la}_n}{\nu_n-re^{i\theta}}\biggr)\biggr|
    \le\biggl|\frac{\tilde{\la}_n}{\nu_n-re^{i\theta}}\biggr|.
\]
Next observe that
\[
    |\nu_n-re^{i\theta}|\ge |\pi n-r e^{i\theta}|-|p_0|\ge |\pi n\sin
    \theta| - |p_0|.
\]
Since~$|p_0|<\frac{1}{2}\pi N\sin \theta$ for sufficiently
large~$N$, for all~$n$ with~$|n|>N$ we have
\[
|\nu_n-re^{i\theta}|> \frac{|\sin \theta|}{2}|\pi n|.
\]
Therefore,
\[
    \biggl|\frac{\tilde{\la}_n}{\nu_n-re^{i\theta}}\biggr|<\frac{2}{\pi |\sin
    \theta|}\frac{|\tilde{\la}_n|}{|n|}.
\]
Since the sequences~$(\tilde{\la}_n)$ and~$\left(1/n\right)$
belong to~$\ell_2$, the series~$\sum\limits_{n=-\infty}^{\infty}
\frac{|\tilde{\la}_n|}{|n|}$ is convergent and so the
series~\eqref{eq:ser_ln} is  convergent uniformly
in~$r>R_{\theta}$ for a fixed~$\theta$,~$\theta\ne 0, \pi$.
Therefore,
\[
\lim\limits_{r\rightarrow \infty}
\sum\limits_{\substack{{n=-\infty}\\{n\ne
    0}}}^{\infty}\ln\frac{\la_n-re^{i\theta}}{\nu_n-re^{i\theta}}=
\sum\limits_{\substack{{n=-\infty}\\{n\ne
    0}}}^{\infty}\lim\limits_{r\rightarrow \infty}
\ln\frac{\la_n-re^{i\theta}}{\nu_n-re^{i\theta}}=0,
\]
which means that the
product~$\prod\limits_{\substack{{n=-\infty}\\{n\ne
    0}}}^{\infty}\frac{\la_n-re^{i\theta}}{\nu_n-re^{i\theta}}$
converges to~$1$ as~$r\rightarrow\infty$.
\end{proof}

\subsection{Asymptotics of eigenfunctions and norming constants}

Let us now consider the vectors~$\bu_n:=\bu(x,\la_n)$.
Put~$\bu_{n,0}:=(\cos \la_n x, \sin \la_nx)^{\mathrm t}$. Then, in
view of Theorem~\ref{thm:tr.op}, we have
\[
    \bu_n=\mathscr R\bu_{n,0}+\mathscr L\bu_{n,0},
\]
where the operator~$\mathscr R$ was defined at the end of
Section~\ref{sec:tr.op} and
\[
    \mathscr{L}\bu(x)=\int_0^xL(x,s)\bu(x-2s)ds
\]
with
\[
    L(x,s)=\left(%
\begin{array}{cc}
  k_{11}(x,s) & -k_{21}(x,s) \\
  k_{21}(x,s) & k_{11}(x,s) \\
\end{array}%
\right)
\]
and~$k_{ij}$ being the corresponding entries of~$K$. This yields
the following
\begin{theorem} The eigenfunctions~$y_n$ of the
problem~\eqref{eq:intr.spr},~\eqref{eq:intr.b.c.} corresponding to
the eigenvalues~$\la_n$ satisfy the asymptotics
\[
 y_n(x)=\sin \left(\la_nx-\int_0^x\!p\right)+\tilde{y}_n(x)
\]
with~$\tilde{y}_n(x)=(0,1)\mathscr L \bu_{n,0}$.
\end{theorem}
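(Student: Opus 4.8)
The plan is to obtain the asymptotics by reading off the second component of the transformation-operator decomposition recorded just above the statement. First I would note that, since $n\in\bZ^*$ and assumption~(A) excludes $\la=0$ from the spectrum, each $\la_n\neq 0$, so the correspondence from Section~\ref{sec:red} applies: the second component of $\bu_n=\bu(\cdot,\la_n)=U(\cdot,\la_n)(1,0)^{\mathrm t}$ is $u_2(\cdot,\la_n)$, and since $\la_n$ is a zero of $u_2(1,\cdot)$ while $u_2(0,\la_n)=0$, this function solves~\eqref{eq:pre.spr} and satisfies both Dirichlet conditions. Hence $y_n:=u_2(\cdot,\la_n)=(0,1)\bu_n$ is the eigenfunction corresponding to $\la_n$, and it remains only to project the identity $\bu_n=\mathscr R\bu_{n,0}+\mathscr L\bu_{n,0}$ onto its second coordinate.

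Before doing so I would verify that this identity indeed follows from Theorem~\ref{thm:tr.op}. Evaluating~\eqref{eq:U} at $\la=\la_n$ and applying $U(\cdot,\la_n)$ to $(1,0)^{\mathrm t}$, the leading term is $e^{a(x)J}(1,0)^{\mathrm t}$ with $a(x)=\int_0^x p-\la_n x$. Since $J^2=-I$ one has $e^{\beta J}=\cos\beta\,I+\sin\beta\,J$, so $e^{a(x)J}=e^{J\int_0^x p}e^{-\la_n xJ}$ and $e^{-\la_n xJ}(1,0)^{\mathrm t}=(\cos\la_n x,\sin\la_n x)^{\mathrm t}=\bu_{n,0}$; thus the leading term equals $e^{J\int_0^x p}\bu_{n,0}=\mathscr R\bu_{n,0}$. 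For the integral term I would multiply out $e^{-\la_n(x-2s)J}K(x,s)(1,0)^{\mathrm t}$ and check entry by entry that it coincides with $L(x,s)\bu_{n,0}(x-2s)$, the matrix $L$ being assembled from the entries $k_{11},k_{21}$ of $K$ exactly as in the definition of $\mathscr L$; integrating in $s$ then produces $\mathscr L\bu_{n,0}$.

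Finally I would compute the second component of $\mathscr R\bu_{n,0}=e^{J\int_0^x p}(\cos\la_n x,\sin\la_n x)^{\mathrm t}$. Writing $P(x)=\int_0^x p$ and using the explicit form of $e^{P(x)J}$, the second row gives $-\sin P(x)\cos\la_n x+\cos P(x)\sin\la_n x=\sin(\la_n x-P(x))$, which is precisely the stated leading term $\sin(\la_n x-\int_0^x p)$. What is left over is $(0,1)\mathscr L\bu_{n,0}=\tilde y_n(x)$, which completes the identification.

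I expect no genuine obstacle here: the analytic content sits entirely in Theorem~\ref{thm:tr.op}, and what remains is careful bookkeeping with the matrix $J$. The single point to be watched is the matching of signs between $e^{-\la_n(x-2s)J}K(x,s)(1,0)^{\mathrm t}$ and the kernel $L(x,s)$, i.e.\ confirming that the off-diagonal entry $-k_{21}$ of $L$ reproduces the $-\sin$ contribution coming from $e^{\beta J}=\cos\beta\,I+\sin\beta\,J$.
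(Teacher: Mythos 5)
Your proposal is correct and follows exactly the paper's route: the paper states the decomposition $\bu_n=\mathscr R\bu_{n,0}+\mathscr L\bu_{n,0}$ obtained from Theorem~\ref{thm:tr.op} and reads the theorem off its second component, which is precisely what you do (you merely spell out the bookkeeping with $e^{\beta J}=\cos\beta\,I+\sin\beta\,J$ that the paper leaves implicit). The sign check you flag does work out, since the second component of $e^{-\la(x-2s)J}K(x,s)(1,0)^{\mathrm t}$ is $k_{11}\sin(\la(x-2s))+k_{21}\cos(\la(x-2s))$, matching the second row of $L(x,s)\bu_{n,0}(x-2s)$.
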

Note that the vectors~$\bv_n:=(\cos(\pi n+p_0)x,\sin(\pi
n+p_0)x)^{\mathrm{t}}$ form an orthonormal basis
in~$L_2(0,1)\times L_2(0,1)$. Next observe that
\begin{align*}
\left\|\bu_{n,0}-\bv_n\right\|&=\left\|(e^{\tilde{\la}_nxJ}-I)\bv_n\right\|\le\left\|\int_0^x\frac{d}{dt}e^{\tilde{\la}_ntJ}dt\right\|\le
C \|\tilde{\la}_n\|,
\end{align*}
where~$C:=\max_{n\in\bZ} e^{|\tilde{\la}_n|}$. Since the
sequence~$(\tilde{\la}_n)$ belongs to~$\ell_2$, this means that
the sequence~$\bu_{n,0}$ is quadratically close to the orthonormal
basis and therefore it is a Bari basis~\cite{You:2001}. Then
\begin{align*}
\|\bu_n-\mathscr R\bu_{n,0}\|&=\|\mathscr L\bu_{n,0}\|\le
\|\mathscr{L}(\bu_{n,0}-\bv_n)\|+\|\mathscr{L}\bv_n\|.
\end{align*}
Since~$\bu_{n,0}$ is quadratically close to~$\bv_n$, the
sequence~$(\|\mathscr{L}(\bu_{n,0}-\bv_n)\|)$ belongs to~$\ell_2$.
Next observe that the operator~$\mathscr{L}$ is of Hilbert-Schmidt
class. Therefore~$(\|\mathscr{L}\bv_n\|)$ also belongs to~$\ell_2$
(see e.g.\ \cite[V.2.4.]{Kat:1966}). All these arguments imply
that the sequence~$(\|\bu_n-\mathscr R\bu_{n,0}\|)$ is
from~$\ell_2$. Since
\[
    |\|\bu_n\|-1|\le\|\bu_n-\mathscr R\bu_{n,0}\|,
\]
we obtain that
\[
    \|\bu_n\|=1+\tilde{\al}_n,
\]
where~$(\tilde{\al}_n)$ belongs to~$\ell_2$.

 Observe also that if~$p$ and~$q$ are real-valued, the
norming constants of~\eqref{eq:intr.spr},~\eqref{eq:intr.b.c.}
corresponding to real and simple eigenvalues coincide with the
norms of eigenvectors of the operator~$\mathcal{D}(P)$ (see
\cite{HryPro:2012}). Therefore the following holds true.
\begin{theorem}
If~$p$ and~$q$ are real-valued, the norming constants~$\al_n$ of
the problem~\eqref{eq:intr.spr},~\eqref{eq:intr.b.c.}
corresponding to the eigenvalues~$\la_n$ satisfy the asymptotics
\[
    \al_n=1+\tilde{\al}_n,
\]
where~$(\tilde{\al}_n)\in\ell_2$.
\end{theorem}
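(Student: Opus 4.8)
The plan is to reduce the assertion to the norm asymptotics of the Dirac eigenvectors $\bu_n$ already obtained in the preceding paragraph. The crucial external ingredient is the identity, valid for real-valued $p$ and $q$ and for real simple eigenvalues, that the norming constant $\al_n$ of the problem~\eqref{eq:intr.spr},~\eqref{eq:intr.b.c.} equals the $L_2$-norm $\|\bu_n\|$ of the eigenvector of $\mathcal{D}(P)$ associated with $\la_n$; this is the content of~\cite{HryPro:2012}. Granting this identification, the theorem is an immediate consequence of the estimate $\|\bu_n\|=1+\tilde{\al}_n$ with $(\tilde{\al}_n)\in\ell_2$ established just above, since the two sequences $(\al_n)$ and $(\|\bu_n\|)$ then coincide.

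For completeness I would recall the mechanism behind that norm estimate, as it carries the real substance of the argument. Using the transformation operator of Theorem~\ref{thm:tr.op} one writes $\bu_n=\mathscr{R}\bu_{n,0}+\mathscr{L}\bu_{n,0}$ with $\bu_{n,0}=(\cos\la_n x,\sin\la_n x)^{\mathrm t}$. Since $p$ is real the matrix $\exp\{J\int_0^x p\}$ is a rotation, so the multiplication operator $\mathscr{R}$ is an isometry on $L_2(0,1)\times L_2(0,1)$; moreover for a real eigenvalue $\la_n$ one has $\|\bu_{n,0}\|=1$, whence $\|\mathscr{R}\bu_{n,0}\|=1$. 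The eigenvalue asymptotics~\eqref{eq:asym_la} then show that $\bu_{n,0}$ is quadratically close to the orthonormal system $\bv_n=(\cos(\pi n+p_0)x,\sin(\pi n+p_0)x)^{\mathrm t}$, and, $\mathscr{L}$ being Hilbert--Schmidt, one obtains $(\|\mathscr{L}\bu_{n,0}\|)\in\ell_2$. The triangle inequality $\bigl|\|\bu_n\|-1\bigr|\le\|\bu_n-\mathscr{R}\bu_{n,0}\|=\|\mathscr{L}\bu_{n,0}\|$ then yields the claim.

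The main obstacle is not analytic but structural: everything hinges on the identification of the norming constants with the eigenvector norms of $\mathcal{D}(P)$, which exploits the self-adjointness features available only in the real-valued case and must be imported from~\cite{HryPro:2012}. The remaining steps --- the isometry property of $\mathscr{R}$, the quadratic closeness of $(\bu_{n,0})$ to an orthonormal basis, and the Hilbert--Schmidt bound $(\|\mathscr{L}\bv_n\|)\in\ell_2$ for an orthonormal family --- are routine, the last being a standard fact about Hilbert--Schmidt operators (as in~\cite{Kat:1966}).
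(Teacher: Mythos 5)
Your proposal is correct and follows essentially the same route as the paper: the theorem is reduced, via the identification of the norming constants with the norms $\|\bu_n\|$ of the eigenvectors of $\mathcal{D}(P)$ imported from~\cite{HryPro:2012} in the real-valued case, to the estimate $\|\bu_n\|=1+\tilde{\al}_n$ with $(\tilde{\al}_n)\in\ell_2$ obtained from the transformation-operator decomposition $\bu_n=\mathscr{R}\bu_{n,0}+\mathscr{L}\bu_{n,0}$, the quadratic closeness of $(\bu_{n,0})$ to the orthonormal system $(\bv_n)$, and the Hilbert--Schmidt property of $\mathscr{L}$. Your explicit remarks that $\mathscr{R}$ acts isometrically for real $p$ and that $\|\bu_{n,0}\|=1$ for real $\la_n$ merely make precise what the paper uses implicitly in the inequality $\bigl|\|\bu_n\|-1\bigr|\le\|\bu_n-\mathscr{R}\bu_{n,0}\|$.
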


\section{Results for the mixed boundary conditions}


In this section, we consider the equation~\eqref{eq:intr.spr}
under the boundary conditions which we call the mixed ones;
namely,
\begin{equation}\label{eq:mx_b.c.}
    y(0)=y^{[1]}(1)+h y(1)=0
\end{equation}
with some complex~$h$. As the proofs are analogous to those
applied in the case of the Dirichlet boundary conditions, we shall
only reformulate the results.

Without loss of generality, we assume that~$\mu=0$ is not an
eigenvalue of the problem \eqref{eq:intr.spr},~\eqref{eq:mx_b.c.}.
Consider the function
\[
    \psi(\mu):=u_1(1,\mu)+\frac{(h_1+h)u_2(1,\mu)}{\mu},
\]
where~$u_1$ and~$u_2$ are the solutions of the
system~\eqref{eq:Dir.system1},\eqref{eq:Dir.system2}
and~$h_1:=(v-r)(1)$. The function~$y(\cdot,\mu):=u_2(\cdot,\mu)$
solves the equation~$\ell(y)+2\mu py=\mu^2y$ and satisfies the
relation
\[
    y^{[1]}+hy=(y'-vy)+(v-r)y+hy=\mu u_1+(v-r+h)u_2;
\]
in particular,~$y(0,\mu)=0$
and~$y^{[1]}(1,\mu)+hy(1,\mu)=\mu\psi(\mu)$. Therefore~$\psi$ is a
characteristic function for the
problem~\eqref{eq:intr.spr},~\eqref{eq:mx_b.c.}, i.e. the zeros
of~$\psi(\mu)$ are the eigenvalues of the mentioned problem.

 Using
the asymptotics form of~$u_1$ and~$u_2$, we have
\begin{align*}
    \psi(\mu)=&\cos a(1,\mu)-\frac{(h_1+h)}{\mu}\sin a(1,\mu)\\&-\int_0^1
k_{21}(1,s)\sin(\mu(1-2s))ds+\int_0^1k_{11}(1,s)\cos(\mu(1-2s))ds\\&+\frac{h_1+h}{\mu}\left(\int_0^1
k_{11}(1,s)\sin(\mu(1-2s))ds+\int_0^1k_{21}(1,s)\cos(\mu(1-2s))ds\right).
\end{align*}
Taking into account Theorem 4 from~\cite{LevOst:79}, we obtain
\begin{theorem}
\label{thm:asy_mix} The eigenvalues
of~\eqref{eq:intr.spr},~\eqref{eq:mx_b.c.} can be labelled
according to their multiplicities as~$\mu_n$,~$n\in\bZ$, so that
they satisfy the following asymptotics
\begin{equation}\label{eq:asym}
    \mu_n=\pi \left(n+\frac12\right)+p_0+\tilde{\mu}_n,
\end{equation}
with~$\ell_2$-sequence~$(\tilde{\mu}_n)$. In particular, all
eigenvalues~$\mu_n$ with large enough~$|n|$ are simple.
\end{theorem}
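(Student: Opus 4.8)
The plan is to mimic exactly the strategy used for the Dirichlet case in the earlier theorem on the asymptotics \eqref{eq:asym_la}, replacing the Dirichlet characteristic function $u_2(1,\la)$ by the mixed one $\mu\psi(\mu)$. The starting point is the explicit formula for $\psi(\mu)$ already displayed just above the statement, which expresses $\psi$ in terms of $a(1,\mu)=p_0-\mu$ and the integral kernels $k_{11}(1,\cdot),k_{21}(1,\cdot)$ furnished by the transformation operator of Theorem~\ref{thm:tr.op}. First I would extract the leading term: since $\cos a(1,\mu)=\cos(\mu-p_0)$ and the remaining integral terms are of the form $\int_0^1 g(s)e^{i\mu(1-2s)}\,ds$ with $g\in L_2(0,1)$, the function $\mu\psi(\mu)$ should be rewritten, after the shift $z:=\mu-p_0$, in the canonical form
\[
    \mu\psi(\mu)=\mu\cos(\mu-p_0)+\int_0^1\!\tilde g(s)\,e^{iz(1-2s)}\,ds
\]
up to the lower-order $O(1/\mu)$ contributions coming from the $(h_1+h)/\mu$ summands.

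The key step is to put this into the precise hypotheses of Theorem~4 of \cite{LevOst:79}: the zeros of a sine-type (or here cosine-type) entire function of exponential type one, perturbed by the Fourier transform of an $L_2$ function, can be enumerated as $z_n=\pi(n+\tfrac12)+\tilde\mu_n$ with $(\tilde\mu_n)\in\ell_2$, because the unperturbed zeros of $\cos z$ are exactly $\pi(n+\tfrac12)$. Undoing the shift $\mu=z+p_0$ then yields \eqref{eq:asym}. The main technical nuisance I expect is the presence of the two $\frac{h_1+h}{\mu}$ terms: these are not of pure exponential type one but carry an extra factor $\mu^{-1}$, so I would argue that they contribute a perturbation which decays like $O(1/n)$ uniformly on the relevant horizontal strips and hence does not affect the $\ell_2$-nature of the correction sequence; more carefully, one absorbs $\mu\cdot\frac{h_1+h}{\mu}\sin a(1,\mu)=(h_1+h)\sin(\mu-p_0)$ as a bounded lower-order term and checks that the remaining $\mu^{-1}$ integral pieces are absorbed into the $\ell_2$ remainder. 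This is where the continuity and $L_2$-regularity of $K(1,\cdot)$ from Theorem~\ref{thm:tr.op} is used to guarantee that all perturbing kernels lie in $L_2(0,1)$.

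Finally, simplicity of all but finitely many eigenvalues follows automatically from the asymptotics \eqref{eq:asym}: since the unperturbed zeros $\pi(n+\tfrac12)+p_0$ are separated by $\pi$ and the corrections $\tilde\mu_n$ tend to $0$, for $|n|$ large the disks about distinct zeros are disjoint, so by Rouché's theorem each contains exactly one zero of the characteristic function, forcing it to be simple. The whole argument is a routine transcription of the Dirichlet proof, with the only genuinely new bookkeeping being the treatment of the $O(1/\mu)$ boundary terms generated by the constant $h$ and by $h_1=(v-r)(1)$.
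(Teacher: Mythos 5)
Your proposal follows essentially the same route as the paper, which likewise expands $\psi(\mu)$ via the transformation-operator kernels into $\cos(\mu-p_0)$ plus $L_2$-Fourier-transform terms plus the $(h_1+h)/\mu$ corrections and then invokes Theorem~4 of \cite{LevOst:79}; your treatment of the $O(1/\mu)$ boundary terms and the Rouch\'e argument for eventual simplicity is in fact more detailed than the paper's one-line justification. The only blemish is your displayed identity for $\mu\psi(\mu)$, where multiplying through by $\mu$ would also multiply the integral terms and so does not give the stated canonical form; your subsequent prose, working with $\psi(\mu)$ itself and absorbing the $\mu^{-1}$ pieces into the $\ell_2$ remainder, is the correct reading.
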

Analogously to the case of the Dirichlet boundary conditions, we
prove the following
\begin{theorem}\label{thm:fact_mix}
Let~$\mu_n$,~$n\in\bZ$, be the eigenvalues
of~\eqref{eq:intr.spr},~\eqref{eq:mx_b.c.}. Then the
characteristic function~$\psi(\mu)$ can be factorized in the
following way
\[
\psi(\mu)=\left\{\begin{array}{ll}
                   -\mathrm{V.p.}\prod\limits_{n=-\infty}^{\infty}\dfrac{\mu_n-\mu}{\pi
\left(n+1/2\right)}, & \text{ if } p_0\ne \dfrac{\pi}{2}+\pi l,\; l\in \bZ, \\
                  (-1)^{l+1}(\mu_0-\mu)\mathrm{V.p.}\prod\limits_{\substack{{n=-\infty}\\{n\ne 0}}}^{\infty}\dfrac{\mu_n-\mu}{\pi n}, & \text{ if } p_0=\dfrac{\pi}{2}+\pi l,\; l\in\bZ. \\
                 \end{array}
\right.
\]
\end{theorem}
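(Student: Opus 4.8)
The plan is to follow the proof of Theorem~\ref{thm:fact} step by step, replacing the reference function $\sin(\la-p_0)$ by the one prescribed by the leading term of $\psi$. First I would record that $\psi$ is entire of exponential type $1$: in the displayed expansion of $\psi(\mu)$ the leading term is $\cos a(1,\mu)=\cos(\mu-p_0)$; the integral terms are Fourier-type transforms of $L_2(0,1)$-functions and so are of exponential type at most $1$; and every term carrying the factor $(h_1+h)/\mu$ is of the same nature divided by $\mu$, hence of strictly lower order. By our standing assumption $\mu=0$ is not an eigenvalue, so $\psi(0)\ne0$; this is what permits using $\psi$ itself, rather than a quotient of it by $\mu$, as the function to be factorized.

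Next I would invoke the Hadamard factorization theorem \cite{You:2001} for the order-one function $\psi$. Since $\psi(0)\ne0$, this gives $\psi(\mu)=\psi(0)e^{c\mu}\prod_n(1-\mu/\mu_n)e^{\mu/\mu_n}$, the product ranging over the eigenvalues $\mu_n$ of Theorem~\ref{thm:asy_mix}. Exactly as in the proof of Theorem~\ref{thm:fact}, the asymptotics $\mu_n=\pi(n+\tfrac12)+p_0+\tilde\mu_n$ with $(\tilde\mu_n)\in\ell_2$ force $\mathrm{V.p.}\sum 1/\mu_n$ to converge, so the factors $e^{\mu/\mu_n}$ may be absorbed into the exponential and I may write $\psi(\mu)=\psi(0)e^{c'\mu}\,\mathrm{V.p.}\prod_n(1-\mu/\mu_n)$ for some constant $c'$.

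To determine $c'$ and $\psi(0)$ I would compare $\psi$ with its leading term along the rays $\mu=re^{i\theta}$, $\theta\ne0,\pi$. By the refined Riemann--Lebesgue lemma \cite[Lemma~1.3.1]{Mar:77} the correction terms are $o$ of the growth of $\cos(\mu-p_0)$, so $\psi(re^{i\theta})/\cos(re^{i\theta}-p_0)\to1$. At this point the two cases separate according to the factorization of the reference function. If $p_0\ne\tfrac\pi2+\pi l$, then $\cos(\mu-p_0)=\mathrm{V.p.}\prod_n\frac{\nu_n-\mu}{\pi(n+1/2)}$ with $\nu_n=\pi(n+\tfrac12)+p_0$, and this does not vanish at the origin. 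Forming the quotient $\psi(\mu)/\cos(\mu-p_0)$ and applying the analogues of Lemmas~\ref{lem:prod1} and~\ref{lem:prod2}---whose proofs are verbatim with $\pi n$ replaced throughout by $\pi(n+\tfrac12)$---to see that $\prod\frac{\mu_n-re^{i\theta}}{\nu_n-re^{i\theta}}\to1$ and that $\prod\frac{\mu_n}{\pi(n+1/2)}$ converges, I obtain $c'=0$ and an explicit value for $\psi(0)$. Reading off the multiplicative constant then yields the first of the two asserted formulas.

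The second case, $p_0=\tfrac\pi2+\pi l$, is where the genuinely new difficulty lies and which I expect to be the main obstacle. Here $\cos(\mu-p_0)=(-1)^l\sin\mu=(-1)^l\mu\,\mathrm{V.p.}\prod_{n\ne0}\frac{\pi n-\mu}{\pi n}$ \emph{vanishes} at the origin, whereas $\psi(0)\ne0$. Consequently, in the quotient $\psi(\mu)/\cos(\mu-p_0)$ the factor belonging to the eigenvalue nearest the origin---relabelled $\mu_0$---has no counterpart among the zeros $\pi n$ of $\sin\mu$ and must be split off: writing $\prod_n(1-\mu/\mu_n)=\frac{\mu_0-\mu}{\mu_0}\prod_{n\ne0}(1-\mu/\mu_n)$ and matching against $\mu\prod_{n\ne0}\frac{\pi n-\mu}{\pi n}$, the extra factor $(\mu_0-\mu)/\mu$ tends to $-1$ along the rays. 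Together with the prefactor $(-1)^l$ this accounts for the sign $(-1)^{l+1}$ and for the isolated term $(\mu_0-\mu)$, while the surviving product, after the same two product lemmas, becomes $\prod_{n\ne0}\frac{\mu_n-\mu}{\pi n}$; this is the second formula. The only points requiring care beyond Theorem~\ref{thm:fact} are this matching of the origin zero of the reference function against the excluded eigenvalue $\mu=0$, and the verification that the $(h_1+h)/\mu$ terms never affect the ray limits; both are routine once the analogues of the two product lemmas are available.
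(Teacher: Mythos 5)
Your proof follows exactly the route the paper intends: the paper gives no separate argument for Theorem~\ref{thm:fact_mix}, stating only that it is proved ``analogously'' to Theorem~\ref{thm:fact}, and your adaptation --- Hadamard factorization of the type-one entire function $\psi$ with $\psi(0)\ne 0$, comparison with $\cos(\mu-p_0)$ along rays via the refined Riemann--Lebesgue lemma, the two product lemmas with $\pi n$ replaced by $\pi(n+\tfrac12)$, and the splitting-off of the eigenvalue nearest the origin when $\cos(\mu-p_0)$ vanishes at $0$ --- is precisely that analogue, with the genuinely new point (the unmatched zero of the reference function at the origin in the case $p_0=\tfrac\pi2+\pi l$) handled correctly and reproducing the second displayed formula. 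One caveat: if you actually carry out the ``reading off'' in the first case, the relation $\psi(0)\prod\pi(n+\tfrac12)/\mu_n=1$ yields $\psi(\mu)=+\,\mathrm{V.p.}\prod_{n}\frac{\mu_n-\mu}{\pi(n+1/2)}$ with no leading minus sign (test against $p=q=h=0$, where $\psi(\mu)=\cos\mu$ and $\mu_n=\pi(n+\tfrac12)$, so the asserted formula would read $\cos\mu=-\cos\mu$); so either justify where the minus sign in the statement comes from or record that it appears to be a misprint.
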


\begin{theorem}
The eigenfunctions~$y_n$
of~\eqref{eq:intr.spr},~\eqref{eq:mx_b.c.} corresponding to the
eigenvalues~$\mu_n$ satisfy the asymptotics
\[
 y_n(x)=\cos \left(\la_nx-\int_0^x\!p\right)+\tilde{y}_n(x),
\]
where the sequence~$(\|\tilde{y}_n(x)\|)$ is from~$\ell_2$.
\end{theorem}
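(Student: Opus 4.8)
The plan is to mimic the eigenfunction asymptotics proof given for the Dirichlet problem, working with the full vector $\bu_n=\bu(x,\mu_n)=U(x,\mu_n)(1,0)^{\mathrm t}$ rather than its second component alone. By Theorem~\ref{thm:tr.op} and the discussion at the end of Section~\ref{sec:tr.op}, the solution $U$ of the Dirac system with potential $P$ is obtained from the free solution $U_0=e^{-\mu xJ}$ via the transformation operator $\mathscr T=\mathscr R+\mathscr K$. Concretely, applying $U(x,\mu_n)$ to $(1,0)^{\mathrm t}$ and recalling that the free solution applied to $(1,0)^{\mathrm t}$ produces the vector $(\cos\mu_n x,\sin\mu_n x)^{\mathrm t}$ (up to the sign convention fixed in the Dirichlet subsection), I would write
\[
    \bu_n=\mathscr R\bu_{n,0}+\mathscr L\bu_{n,0},
\]
where $\bu_{n,0}:=(\cos\mu_n x,\sin\mu_n x)^{\mathrm t}$ and $\mathscr R,\mathscr L$ are exactly the multiplication and integral operators appearing in the proof of the Dirichlet eigenfunction theorem. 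The multiplication operator $\mathscr R$ acts by $\exp\{J\int_0^x p\}$, and a direct computation shows that its action on $(\cos\mu_n x,\sin\mu_n x)^{\mathrm t}$ rotates the phase by $\int_0^x p$, producing the principal term $\cos(\mu_n x-\int_0^x p)$ in the second component; here the leading trigonometric function is a cosine rather than a sine precisely because the mixed boundary condition selects the characteristic function $\psi$ built from $u_1$ (see the explicit formula for $\psi(\mu)$), so the relevant scalar eigenfunction $y_n=u_2(\cdot,\mu_n)$ carries the shifted cosine profile.

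Next I would extract the remainder term $\tilde y_n(x)=(0,1)\mathscr L\bu_{n,0}$ and prove that $(\|\tilde y_n\|)\in\ell_2$. This is where the bulk of the estimate lies, and it proceeds exactly as in the Dirichlet case. I would introduce the reference vectors built from the free spectrum, here $\bv_n:=(\cos(\pi(n+\tfrac12)+p_0)x,\sin(\pi(n+\tfrac12)+p_0)x)^{\mathrm t}$, which again form an orthonormal basis of $L_2(0,1)\times L_2(0,1)$. Using the asymptotics~\eqref{eq:asym}, namely $\mu_n=\pi(n+\tfrac12)+p_0+\tilde\mu_n$ with $(\tilde\mu_n)\in\ell_2$, the difference $\bu_{n,0}-\bv_n=(e^{\tilde\mu_n xJ}-I)\bv_n$ is controlled in norm by $C\,|\tilde\mu_n|$ with $C=\max_n e^{|\tilde\mu_n|}$, exactly as in the bound $\|\bu_{n,0}-\bv_n\|\le C\|\tilde\mu_n\|$ established before. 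Hence $(\bu_{n,0})$ is quadratically close to the orthonormal basis $(\bv_n)$, so it is a Bari basis.

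The key step is then to split $\|\mathscr L\bu_{n,0}\|\le\|\mathscr L(\bu_{n,0}-\bv_n)\|+\|\mathscr L\bv_n\|$ and show both sequences are $\ell_2$. The first term is $\ell_2$ because $(\|\bu_{n,0}-\bv_n\|)$ is $\ell_2$ and $\mathscr L$ is bounded; the second is $\ell_2$ because $\mathscr L$ is a Hilbert--Schmidt operator, so the sequence of norms of its values on an orthonormal basis is square-summable (cf.\ \cite[V.2.4.]{Kat:1966}). Since $\tilde y_n=(0,1)\mathscr L\bu_{n,0}$ has norm dominated by $\|\mathscr L\bu_{n,0}\|$, I conclude $(\|\tilde y_n\|)\in\ell_2$, which is the assertion. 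The main obstacle is purely bookkeeping: one must verify that the phase shift produced by $\mathscr R$ yields precisely $\cos(\mu_n x-\int_0^x p)$ and that the Hilbert--Schmidt property of $\mathscr L$ transfers from the Dirichlet setting unchanged. Since the kernel $K$ of $\mathscr K$ (and hence of $\mathscr L$) depends only on $p$ and $v$ through Theorem~\ref{thm:tr.op} and is independent of the boundary conditions, the Hilbert--Schmidt bound carries over verbatim, and no genuinely new analytic difficulty arises.
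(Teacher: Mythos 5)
Your remainder estimate is exactly the paper's argument: the paper gives no separate proof for the mixed case (it declares the proofs analogous to the Dirichlet ones), and your treatment of $\tilde y_n=(0,1)\mathscr L\bu_{n,0}$ --- quadratic closeness of $\bu_{n,0}$ to the orthonormal system $\bv_n$ via $\mu_n=\pi(n+\tfrac12)+p_0+\tilde\mu_n$, boundedness of $\mathscr L$ for the first piece, the Hilbert--Schmidt property for the second --- transplants the Dirichlet proof correctly and needs no new input, since $K$ (hence $\mathscr L$) is independent of the boundary conditions.

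The gap is in your identification of the principal term. With $J=\bigl(\begin{smallmatrix}0&1\\-1&0\end{smallmatrix}\bigr)$ one has $e^{\theta J}=\cos\theta\, I+\sin\theta\, J$, so applying $\mathscr R=\exp\{J\int_0^xp\}$ to $\bu_{n,0}=(\cos\mu_nx,\sin\mu_nx)^{\mathrm t}$ gives first component $\cos\bigl(\mu_nx-\int_0^xp\bigr)$ and \emph{second} component $\sin\bigl(\mu_nx-\int_0^xp\bigr)$. The scalar eigenfunction of the mixed problem is still $y_n=u_2(\cdot,\mu_n)$ (it must satisfy $y(0)=0$), so the transformation-operator argument yields the leading term $\sin\bigl(\mu_nx-\int_0^xp\bigr)$, exactly as in the Dirichlet case; only the eigenvalue asymptotics change. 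Your justification of the cosine --- that the mixed condition ``selects the characteristic function built from $u_1$'' and therefore $u_2$ ``carries the shifted cosine profile'' --- conflates the characteristic function $\psi$ (which indeed involves $u_1(1,\mu)$ and has leading term $\cos(\mu-p_0)$) with the eigenfunction (which is $u_2$); the sine and cosine profiles differ by a quantity of $L_2$-norm of order one, so this is not a harmless renormalization. In fairness, the theorem as printed (cosine, and $\la_n$ where $\mu_n$ is meant) looks like a typo in the paper, and your argument, carried out correctly, proves the corrected statement with $\sin\bigl(\mu_nx-\int_0^xp\bigr)$; but as written your computation of the action of $\mathscr R$ on the second component is false, and the stated cosine formula cannot be obtained this way.
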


\begin{theorem} If~$p$ and~$q$ are real-valued, the norming
constants~$\beta_n$ of~\eqref{eq:intr.spr},~\eqref{eq:mx_b.c.}
corresponding to the eigenvalues~$\mu_n$ satisfy the asymptotics
\[
    \beta_n=1+\tilde{\beta}_n,
\]
with an~$\ell_2$-sequence~$(\tilde{\beta}_n)$.
\end{theorem}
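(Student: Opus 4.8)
The plan is to reproduce, essentially verbatim, the argument that yielded the norming-constant asymptotics in the Dirichlet case (the penultimate theorem of Section~\ref{sec:asy}), with the frequencies $\pi n+p_0$ replaced by the shifted ones $\pi(n+\tfrac12)+p_0$ dictated by Theorem~\ref{thm:asy_mix}. First I would introduce the vector eigenfunctions $\bu_n:=\bu(x,\mu_n)=U(x,\mu_n)(1,0)^{\mathrm t}$ of the Dirac system attached to~\eqref{eq:intr.spr},~\eqref{eq:mx_b.c.}, together with the free vectors $\bu_{n,0}:=(\cos\mu_n x,\sin\mu_n x)^{\mathrm t}$. By Theorem~\ref{thm:tr.op} they are related through the transformation operator as $\bu_n=\mathscr R\bu_{n,0}+\mathscr L\bu_{n,0}$, with $\mathscr R$ and $\mathscr L$ exactly as introduced earlier in this section.

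Next I would compare $\bu_{n,0}$ with the vectors $\bv_n:=(\cos\omega_n x,\sin\omega_n x)^{\mathrm t}$, where $\omega_n:=\pi(n+\tfrac12)+p_0$. Since $p$ and $q$ are real, $p_0$ and hence every $\omega_n$ is real, and a direct computation using $\omega_n-\omega_m=\pi(n-m)$ gives $\langle\bv_n,\bv_m\rangle=\int_0^1\cos(\pi(n-m)x)\,dx=\delta_{nm}$; thus $(\bv_n)_{n\in\bZ}$ is orthonormal, its completeness in $L_2(0,1)\times L_2(0,1)$ following from the standard theory of such exponential families. As $\mu_n-\omega_n=\tilde\mu_n$ and the pertinent exponents are scalar multiples of $J$, one has $\bu_{n,0}=e^{\tilde\mu_n xJ}\bv_n$, whence
\[
\|\bu_{n,0}-\bv_n\|=\bigl\|(e^{\tilde\mu_n xJ}-I)\bv_n\bigr\|\le C\,|\tilde\mu_n|
\]
with $C:=\max_{n\in\bZ}e^{|\tilde\mu_n|}$. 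Because $(\tilde\mu_n)\in\ell_2$ by Theorem~\ref{thm:asy_mix}, the system $(\bu_{n,0})$ is quadratically close to the orthonormal basis $(\bv_n)$ and is therefore itself a Bari basis~\cite{You:2001}.

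I would then estimate, exactly as in the Dirichlet case,
\[
\|\bu_n-\mathscr R\bu_{n,0}\|=\|\mathscr L\bu_{n,0}\|\le\|\mathscr L(\bu_{n,0}-\bv_n)\|+\|\mathscr L\bv_n\|.
\]
The first summand produces an $\ell_2$-sequence by quadratic closeness, and the second does so because $\mathscr L$ is of Hilbert--Schmidt class and $(\bv_n)$ is orthonormal~\cite[V.2.4.]{Kat:1966}. Hence $(\|\bu_n-\mathscr R\bu_{n,0}\|)\in\ell_2$, and from $\bigl|\|\bu_n\|-1\bigr|\le\|\bu_n-\mathscr R\bu_{n,0}\|$ one gets $\|\bu_n\|=1+\tilde\beta_n$ with $(\tilde\beta_n)\in\ell_2$. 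Finally, for real-valued $p,q$ the norming constants $\beta_n$ coincide with the norms of the eigenvectors of the Dirac operator carrying the mixed boundary condition, by the analogue of~\cite{HryPro:2012}; this identifies $\beta_n=\|\bu_n\|$ and completes the proof.

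The step I expect to be the main obstacle is this last identification: establishing, for the mixed condition, the equality between the norming constants of~\eqref{eq:intr.spr},~\eqref{eq:mx_b.c.} and the Dirac eigenvector norms. The delicacy is that the right-endpoint relation $\mu u_1(1)+(h_1+h)u_2(1)=0$ is $\mu$-dependent, so one must first single out the correct (self-adjoint, for real data) realization of the Dirac operator and check that the normalization of~\cite{HryPro:2012} transfers to it; by contrast the orthonormal-basis claim for the half-integer-shifted frequencies, though genuinely new, is routine once the computation above is carried out.
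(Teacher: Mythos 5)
Your proposal is correct and follows exactly the route the paper intends: the paper explicitly omits the proofs in the mixed-condition section, stating they are analogous to the Dirichlet case, and your argument is precisely that adaptation, with the shifted orthonormal system $\bv_n=(\cos\omega_n x,\sin\omega_n x)^{\mathrm t}$, $\omega_n=\pi(n+\tfrac12)+p_0$, playing the role of $(\cos(\pi n+p_0)x,\sin(\pi n+p_0)x)^{\mathrm t}$. You are also right to flag the identification $\beta_n=\|\bu_n\|$ under the $\mu$-dependent right-endpoint condition as the only step not literally covered by the Dirichlet argument; the paper likewise delegates this to \cite{HryPro:2012} without further comment.
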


\bigskip

\noindent\emph{Acknowledgement.} The author is thankful to her
supervisor Dr.~Rostyslav Hryniv for valuable suggestions and for
help with the preparation of the manuscript.


\appendix

\section{Algebraic multiplicities of the eigenvalues}\label{sec:A}
In this appendix we recall the main notions of the spectral theory
for the operator pencils. We also show that the algebraic
multiplicity of~$\la$ as an eigenvalue of the operator pencil~$T$
of~\eqref{eq:intr.T} coincides with the corresponding multiplicity
of~$\la$ as an eigenvalue of the
problem~\eqref{eq:intr.spr},~\eqref{eq:intr.b.c.}.

An \emph{operator pencil}~$T$ is an operator-valued function
on~$\bC$. The \emph{spectrum} of an operator pencil~$T$ is the
set~$\sigma(T)$  of all $\lambda\in\bC$ such that~$T(\lambda)$ is
not boundedly invertible, i.e.\,
\[
    \sigma(T)=\{\lambda\in\mathbb{C}\mid 0\in\sigma(T(\lambda))\}.
\]
A number $\lambda\in\bC$ is called an \emph{eigenvalue} of $T$ if
$T(\lambda)y=0$ for some non-zero function~$y\in\dom T$, which is
then the corresponding \emph{eigenfunction}.

Vectors~$y_1,...,y_{m-1}$ from~$\dom T$ are said to be associated
with an eigenvector~$y_0$ correspon\-ding to an eigenvalue~$\la$ if
\begin{equation}\label{eq:A.chain}
    \sum\limits_{k=0}^j\frac{1}{k!}T^{(k)}(\la)y_{j-k}=0,\quad {j=1,...,m-1}.
\end{equation}
Here~$T^{(k)}$ denotes the \emph{k}-th derivative of~$T$ with
respect to~$\la$. The number~$m$ is called the length of the
chain~$y_0,\dots,y_{m-1}$ of an eigen- and associated vectors. The
maximal length of a chain starting with an eigenvector~$y_0$ is
called the \emph{algebraic multiplicity} of an eigenvector~$y_0$.

For an eigenvalue~$\la$ of~$T$ the dimension of the null-space
of~$T(\la)$ is called the \emph{geometric multiplicity} of~$\la$.
The eigenvalue is said to be \emph{geometrically simple} if its
geometric multiplicity equals to one.

All the eigenvalues of the pencil~$T$ of~\eqref{eq:intr.T} are
geometrically simple (see \cite{Pro:2012pro}), and then the
\emph{algebraic multiplicity} of an eigenvalue is the algebraic
multiplicity of the corresponding eigenvector. (If the
eigenvalue~$\la$ is not geometrically simple, its algebraic
multiplicity is the number of vectors in the corresponding
canonical system, see~\cite{Mar:88,Kel:1971}). An eigenvalue is
said to be \emph{algebraically simple} (or just \emph{simple}) if
its algebraic multiplicity is one.

 In the next proposition we show
that the order of~$\la$ as a zero of the characteristic
function~$\varphi$ coincides with the algebraic multiplicity
of~$\la$ as an eigenvalue of the operator pencil~$T$ defined
by~\eqref{eq:intr.T}.

\begin{proposition}\label{pro:pre.def_coin}
Suppose~$\la$ is an eigenvalue of the spectral
problem~\eqref{eq:intr.spr},~\eqref{eq:intr.b.c.}. Then~$\la$ is a
zero of the characteristic function~$\varphi$ of order~$m$ if and
only if~$\la$ is an eigenvalue of the operator pencil~$T$ given
by~\eqref{eq:intr.T} of algebraic multiplicity~$m$.
\end{proposition}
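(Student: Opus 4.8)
The plan is to exploit the analytic dependence on $z$ of the Cauchy solution $y(x,z)$ and to match its Taylor coefficients at $z=\la$ against the chain relations~\eqref{eq:A.chain}. Since $T(z)y=z^2y-2zpy-\ell(y)$ is a quadratic polynomial in $z$, one has $T^{(1)}(z)y=(2z-2p)y$, $T^{(2)}(z)y=2y$, and $T^{(k)}\equiv 0$ for $k\ge 3$; moreover the solution $y(\cdot,z)$ of $T(z)y=0$ with $y(0,z)=0$, $y^{[1]}(0,z)=1$ is entire in $z$. First I would set $y_k(x):=\tfrac1{k!}\partial_z^k y(x,\la)$. Differentiating the identity $T(z)y(\cdot,z)\equiv 0$ $j$ times in $z$, dividing by $j!$ and invoking the Leibniz rule gives exactly
\[
\sum_{k=0}^j\frac1{k!}T^{(k)}(\la)\,y_{j-k}=0,\qquad j\ge 0,
\]
so that $y_0,y_1,\dots$ satisfy~\eqref{eq:A.chain}. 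Differentiating the initial conditions yields $y_k(0)=0$ for all $k$, together with $y_0^{[1]}(0)=1$ and $y_k^{[1]}(0)=0$ for $k\ge 1$; in particular $y_0=y(\cdot,\la)$ is the eigenfunction. The only obstruction to $y_k\in\dom T$ is the boundary condition at $x=1$, and since $y_k(1)=\tfrac1{k!}\varphi^{(k)}(\la)$, the vanishing $y_k(1)=0$ for $k=0,\dots,m-1$ is equivalent to $\varphi$ having a zero of order at least $m$ at~$\la$.

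For the forward implication I would argue directly: if $\varphi$ has a zero of order $m$ at $\la$, the displayed relations show that $y_0,\dots,y_{m-1}$ all lie in $\dom T$ and form a chain of eigen- and associated vectors of length $m$, whence the algebraic multiplicity of $\la$ is at least the order of the zero.

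The reverse inequality is the crux. Given any chain $w_0,\dots,w_{M-1}$ in $\dom T$ satisfying~\eqref{eq:A.chain}, I would assemble $W(x,z):=\sum_{k=0}^{M-1}w_k(x)(z-\la)^k$. Expanding $T(z)=T(\la)+T'(\la)(z-\la)+\tfrac12 T''(\la)(z-\la)^2$, the chain relations say precisely that $T(z)W(\cdot,z)=(z-\la)^M G(\cdot,z)$ with $G$ polynomial in $z$, while $w_k\in\dom T$ gives $W(0,z)=W(1,z)=0$ identically. Since $T(\la)w=0$ together with $w(0)=0$ forces $w$ to be a multiple of $y_0$, the eigenvector must satisfy $w_0=c\,y_0$ with $c\ne 0$; hence the polynomial $\gamma(z):=W^{[1]}(0,z)$ has $\gamma(\la)=c\,y_0^{[1]}(0)=c\ne 0$. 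Setting $V:=W-\gamma(z)\,y(\cdot,z)$ kills the Cauchy data at $x=0$, so that $V(0,z)=V^{[1]}(0,z)=0$ while $T(z)V=(z-\la)^M G$. Because the Cauchy problem for $T(z)$ with zero data at $x=0$ has a solution operator bounded and analytic in $z$, the forcing propagates to $V(\cdot,z)=O((z-\la)^M)$. Evaluating at $x=1$ and using $W(1,z)=0$ gives $\gamma(z)\varphi(z)=-V(1,z)=O((z-\la)^M)$; dividing by the unit $\gamma$ (recall $\gamma(\la)\ne 0$) shows $\varphi$ vanishes to order at least $M$ at $\la$. Applying this to a maximal chain, whose length is the algebraic multiplicity, gives that the algebraic multiplicity does not exceed the order of the zero, and the two estimates together yield equality.

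The hard part will be the justification that $V(\cdot,z)=O((z-\la)^M)$ in the reverse step: one must argue, via variation of parameters in the quasi-derivative formulation, that solving $T(z)V=(z-\la)^M G$ with vanishing Cauchy data at $x=0$ produces a solution divisible by $(z-\la)^M$ uniformly in $x$, which rests on the analyticity and local boundedness of the Cauchy solution operator for the singular expression~$\ell$. The algebraic bookkeeping of the chain relations and the Leibniz expansion, by contrast, I expect to be entirely routine.
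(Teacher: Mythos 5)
Your proof is correct, and the forward half is essentially identical to the paper's: both take $y_k=\tfrac1{k!}\partial_z^k y(\cdot,z)|_{z=\la}$, verify the chain relations~\eqref{eq:A.chain} by the Leibniz rule, and read off $y_k\in\dom T$ from $y_k(1)=\tfrac1{k!}\varphi^{(k)}(\la)$. The reverse inequality is where you genuinely diverge. The paper stays purely algebraic: after normalizing $v_0=y_0$ it proves by induction that $v_k-y_k=\sum_{j=1}^{k}c_jv_{k-j}$, using only that a solution of $\tau(\la)w=0$ with $w(0)=0$ must be a multiple of $y_0$ (uniqueness of the Cauchy problem at the single value $z=\la$); then $v_m(1)=0$ forces $y_m(1)=\tfrac1{m!}\varphi^{(m)}(\la)=0$, contradicting the order of the zero. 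You instead assemble the chain into the generating polynomial $W(x,z)=\sum_k w_k(x)(z-\la)^k$, subtract $\gamma(z)\,y(\cdot,z)$ to annihilate the Cauchy data, and divide out $(z-\la)^M$ using the dependence of the inhomogeneous Cauchy solution operator on $z$ --- the standard ``root function'' argument from the Keldysh theory of pencils. Both routes work; the paper's buys self-containedness (no analytic input beyond uniqueness at one point, which matters for $q=r'\in W_2^{-1}$), while yours is structurally cleaner and extends verbatim to pencils of arbitrary polynomial degree, at the price of the step you yourself flag: one must justify, via the quasi-derivative regularization and variation of parameters, that $T(z)V=(z-\la)^MG(\cdot,z)$ with $V(0,z)=V^{[1]}(0,z)=0$ yields $V(1,z)=(z-\la)^M H(z)$ with $H$ locally bounded near $\la$. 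Note that local boundedness already suffices --- since $\gamma\varphi$ is entire, boundedness of $\gamma(z)\varphi(z)/(z-\la)^M$ near $\la$ gives a zero of order at least $M$ --- so full analyticity of the solution operator need not be established.
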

\begin{proof}
Suppose that~$y(x,z)$ is the solution of~\eqref{eq:intr.spr}
subject to the initial conditions~$y(0,z)=0$,~$y^{[1]}(0,z)=1$ and
that~$\la$ is a zero of~$\varphi(z)=y(1,z)$ of order~$m$.
Then~$y(x,\la)$ is an eigenfunction
of~\eqref{eq:intr.spr},~\eqref{eq:intr.b.c.} corresponding
to~$\la$. Clearly,~$y(x,\la)$ is also an eigenfunction of the
operator pencil~$T$ corresponding to the eigenvalue~$\la$.
 Consider the chain of the
vectors~$y_j$,~$j=0,1,\dots$, such that~$y_0=y(x,\la)$ and
\[
    y_{j}(x,\la):=\left.\frac{1}{j!}\frac{\partial^j
y(x,z)}{\partial z^j}\right|_{z=\la},\quad j\ge 1.
\]
Set
\[
    \tau(\la)y:=\la^2y-2\la py-\ell(y).
\]
Straightforward verification shows that~$y_j$ satisfy
equalities~\eqref{eq:A.chain} with~$\tau$ instead of~$T$.
Moreover, since~$\la$ is a zero of~$y(1,z)$ of order~$m$, we
have~$y_1(1)=\dots=y_{m-1}(1)=0$, and all the functions~$y_j$,
$j=0,\dots, m-1$, belong to the domain of~$T$ and so form a chain
of eigen- and associated vectors of~$T$ corresponding to~$\la$.
Therefore~$m$ does not exceed the algebraic multiplicity of~$\la$
as an eigenvalue of~$T$.

Assume that~$v_0,\dots, v_{l}$ is a chain of eigen- and associated
vectors corresponding to an eigenvalue~$\la$ of~$T$. Then~$v_0$
solves the equation
\[
    \tau(\la) y=0
\]
and satisfies the boundary conditions~\eqref{eq:intr.b.c.}, and
thus coincides with~$y_0$ up to a scalar factor. Without loss of
generality, we assume that~$v_0=y_0$ and then show by induction
that there exists a sequence~$(c_k)_{k=1}^l$ such that
\begin{equation}\label{eq:ind}
v_k-y_k=\sum_{j=1}^{k}c_{j}v_{k-j}.
\end{equation}
To start with, observe that
\[
    \tau'(\la)(v_0-y_0)+\tau(\la)(v_1-y_1)=0.
\]
But~$v_0-y_0=0$ and~$(v_1-y_1)(0)=0$. Therefore~$v_1-y_1=c_1v_0$
giving the base of induction. Next suppose that the statement
holds for all~$k<n$ and prove it for~$k=n$. Observe that
\begin{align*}
    \tau(\la)(v_n-y_n)&=-\sum\limits_{j=1}^{n}\frac{1}{j!}\tau^{(j)}(v_{n-j}-y_{n-j}).
\end{align*}
By assumption we obtain
\begin{align*}
\tau(\la)(v_n-y_n)&=-\sum\limits_{j=1}^n\frac{1}{j!}\tau^{(j)}\biggl(\sum\limits_{i=1}^{n-j}c_iv_{n-j-i}\biggr)\\
&=-\sum\limits_{i=1}^{n-1}c_{i}\biggl(\sum\limits_{j=1}^{n-i}\frac{1}{j!}\tau^{(j)}v_{n-j-i}\biggr)
=\sum\limits_{j=1}^{n-1}c_{j}\tau(\la)v_{n-j}.
\end{align*}
Therefore,
\[
    \tau(\la)\biggl(v_n-y_n-\sum\limits_{j=1}^{n-1}c_{j}v_{n-j}\biggr)=0
\]
and
\[
    \biggl(v_n-y_n-\sum\limits_{j=1}^{n-1}c_{j}v_{n-j}\biggr)(0)=0,
\]
giving that
\[
    v_n-y_n-\sum\limits_{j=1}^{n-1}c_{j}v_{n-j}=c_{n}v_0,
\]
i.e.~\eqref{eq:ind} holds.

Assuming that~$l\ge m$, we see
that~$v_m-y_m=\sum_{j=1}^mc_{j}v_{m-j}$ and so~$y_m(1)=0$. This
contradicts the fact that~$\la$ is a zero of~$\varphi(z)$ of
order~$m$. The proof is complete.
\end{proof}

\bibliographystyle{abbrv}

\bibliography{My_bibl}

\end{document}